\documentclass[11pt,oneside,onecolumn,reqno]{amsart}
\usepackage{amsgen, amstext,amsbsy,amsopn, amsthm, amsfonts,amssymb,amscd,amsmat
	h,euscript,enumerate,url,verbatim,calc,oldgerm,mathrsfs,bm,multicol,rotating,multirow,tabularx}

\usepackage{tikz}
\tikzset{->-/.style={decoration={
			markings,
			mark=at position #1 with {\arrow{>}}},postaction={decorate}}}
\usetikzlibrary{shapes,backgrounds,calc}
\usepackage[utf8]{inputenc}
\usepackage{latexsym}
\usepackage{graphics}
\usepackage{color}
\usepackage{bm}
\newcommand{\wrt}{with respect to}

\theoremstyle{plain}
\newtheorem{theorem}{Theorem}[section]

\theoremstyle{definition}

\newtheorem{remark}[theorem]{Remark}
\newtheorem{note}[theorem]{Note}

\theoremstyle{remark}
\newtheorem*{ack*}{\textbf{Acknowledgement}}
\usetikzlibrary{decorations.markings}

\usepackage{ragged2e}
\usepackage{pifont}
\usepackage{setspace}
\makeatletter

\newcommand{\Rmnum}[1]{\expandafter\@slowromancap\romannumeral #1@}
\makeatother
\usepackage{csquotes} 
\usepackage{cite}
\usepackage{amsaddr}
\usepackage{caption}
\usepackage{subcaption}
\usepackage{pgfplots}
\begin{document}
	\title[Delta shock waves for the Chaplygin Cargo-LeRoux model]{Riemann Problem for the isentropic Cargo-LeRoux system with extended Chaplygin fluids}
	\author[M. Zafar]{Priyanka$^\mathrm{A}$, \, M. Zafar$^\mathrm{A}$,\, V. D. Sharma$^\mathrm{B}$}
	\address{$^\mathrm{A}$ Department of Mathematics \& Computing, Dr. B.R. Ambedkar NIT Jalandhar, India\\ $^\mathrm{B}$ Department of Mathematics, IIT Bombay, Mumbai, India}
%	\email[Priyanka]{priyanka.ma.21@nitj.ac.in}
	\email[M.Zafar]{zafarm@nitj.ac.in}
%	\email[V. D. Sharma]{vduttsharma@gmail.com}
	\begin{abstract}
%		 We investigate Riemann solutions of the isentropic Cargo–LeRoux system of conservation laws incorporating extended Chaplygin fluid models. The system admits rich nonlinear structures, including delta shock waves and vacuum states. We rigorously analyze the formation and asymptotic behavior of these solutions, particularly in the vanishing-parameters limit of the equation of state. Our results reveal new insights into singular wave dynamics and highlight the role of generalized pressure laws in the evolution of weak solutions.
		This study deals with the Riemann solutions of the isentropic Cargo–LeRoux system of conservation laws along with an extended Chaplygin fluid model and a generalized equation of state that captures nonlinear compressibility and non-ideal fluid behavior. It is shown that in the vanishing pressure limiting case, the Riemann solution yields a delta shock and a vacuum state. Delta shocks form when mass concentrates due to vanishing pressure while vacuum states emerge in rarefaction dominated regions. These wave patterns are crucial in modeling real world problems like discontinuous and multi-phase astrophysical fluid flows, underwater explosions, and combustion fronts. 
		 
		 \noindent \textbf{Keywords.} Riemann problem;  Modified Chaplygin fluid; Delta shock;  Vacuum; Cargo-LeRoux model
	\end{abstract}
	\maketitle
		\section{Introduction}
		The Riemann problems for Euler equations play a key role in modeling discontinuities in fluid variables; their solutions involve rarefaction waves, contact discontinuities, and shock waves (see, \cite{whitham, smoller2012shock}). Rarefaction waves occur when pressure and density drop across a discontinuity; these are relevant in vacuum formation. Contact discontinuities occur when density changes but velocity and pressure remain constant across the interface; these model interfaces between different materials or phases. Shock waves represent compression phenomena like sonic booms, blast waves, and piston driven flows, where discontinuous jumps satisfy Rankine-Hugoniot jump conditions.

		In the recent past, several studies have investigated Riemann solutions in a variety of material mediums under different pressure laws (see, \cite{chen2003,cheng2018,cheng2019,sheng2020,ibrahim2022,Kuila2022,sahoo2023} and many more references cited therein). 		
%		Chen and Liu (see, \cite{chen2003}) showed the appearance of delta shock and vacuum state in the vanishing pressure limit of polytropic Euler equations, while Jiang et al. (see, \cite{jiang2023cavitation}) examined limiting behaviors in isothermal dusty gas equations. Li and Shen (see, \cite{li2025a}), and Sun and Xin (see, \cite{sun2024}), analyzed Riemann solutions of the Aw-Rascle-Zhang traffic model under different pressure laws, highlighting concentration effects. Guo et al. (see, \cite{guo2017}) and Zhu et al. (see, \cite{zhu2024}) explored the vanishing pressure limit in Chaplygin and modified Chaplygin Euler equations with a source term, respectively. Yang and Liu (see, \cite{yang2015}) used flux approximations to reveal concentration and cavitation in nonisentropic Euler systems. Additional studies by Sheng and Shao (see, \cite{sheng2020}) and Sun and Wei (see, \cite{sun2023}) addressed vanishing pressure and two-phase flow models, respectively. Further references on Riemann problem can be found in \cite{tong2025, neves2025,  shen2022z,  ibrahim2022, zhang2022delta, li2024, sarrico2024}.
%	
		In this paper, we analyze the structure of Riemann solutions to the following initial value problem for isentropic Cargo-LeRoux model
			\begin{equation}\label{m1}
			\begin{cases}
				\rho_{t}+(\rho u)_{x}=0,
				\\(\rho q)_{t}+(\rho q u )_{x}=0,
				\\(\rho u)_{t}+(\rho u^{2} + \mathcal{P}(\rho) +q )_{x}=0,
			\end{cases} 
		\end{equation} 
	with initial data of the form
	\begin{equation} \label{m3}
		(\rho, q,  u)(x, t=0) =	\begin{cases}
			(\rho_{\ell}, q_{\ell}, u_{\ell}), \quad x<0,
			\\ (\rho_{r}, q_{r}, u_{r}), \quad x>0,
		\end{cases}
	\end{equation}
		together with the generalized equation of state for extended Chaplygin fluids
	\begin{equation}\label{m2}
		\mathcal{P}(\rho) = A \rho^{n}-\frac{B}{\rho^\alpha}, \quad 1\leq n \leq 3, \qquad 0<\alpha\leq1,
	\end{equation}where $\rho$ is the density, $u$ the velocity, $\mathcal{P}$ the pressure, $q(x, t)$ hydro-static pressure as a potential satisfying $ q_{x}=  \rho g$ and $q_{t}=- \rho g u,$ with gravitational constant $g$, and $A,$ $ B$ are non-negative constants. It may be noticed that \eqref{m2} for $n = 1$ corresponds to the modified Chaplygin fluid and for $A = 0,$ it reduces to the generalized Chaplygin fluid (see, \cite{benaoum2002,setare2007}).
	The Cargo-LeRoux model \eqref{m1} was introduced by Cargo and LeRoux (see, \cite{cargo1994, leroux1999}) to develop a well balanced numerical algorithm for Euler equations with constant gravity. A distinctive feature of this model is the incorporation of an additional pressure-type variable ($q$), through which the effects of gravity are incorporated into a system of conservation laws. This formulation not only preserves the essential hyperbolic character of the underlying fluid equations but also connects the Cargo–LeRoux system with multi-pressure Euler models and related nonlinear fluid systems.
	% Such a structure makes the model relevant to gravity-driven flows and wave propagation arising in geophysical, hydraulic, and environmental settings.
	The Cargo-LeRoux model
	describes the behavior of a fluid flowing under the influence of constant gravity. Owing to its ability to capture the effects of gravity on fluid flow, the model has applications in a wide range of disciplines, viz., geophysics, aeronautical engineering, civil engineering, biomedical engineering, and environmental engineering etc. 
	In particular, it provides a mathematical framework for studying fluid transport in systems such as pipelines, channels, and open-water flows. Moreover, the model is useful for analyzing the propagation of waves in fluids, which is relevant to the design of coastal and hydraulic structures, flood-risk assessment, and the study of large-scale fluid phenomena such as ocean currents and atmospheric flows (see, \cite{gupta2026pressureless, maurya2024conservation}). 
	It may be remarked that if $q$ is regarded as a constant then the system  \eqref{m1} reduces to the Euler system without external forces; focusing on the formation and propagation of non-classical wave phenomena such as delta shock waves and vacuum states.

	The above equation of state \eqref{m2} is a generalization that captures nonlinear compressibility and non-ideal fluid behavior; indeed, it is a powerful tool for modeling non-ideal compressible fluids, especially in regimes where standard polytropic laws fail. It captures two competing effects; for instance in astrophysical fluid flows, the term $A\rho^n$ models stiffening at high densities while the term $-B\rho^ {(-\alpha)}$ models exotic behavior like vacuum formation or clustering; this term becomes dominant in the limit $\rho\to 0$ leading to negative pressure. This dual behavior is key to modeling nonlinear phenomena like astrophysical flows, combustion and detonation, underwater explosions, and implosions. Euler equations with this equation of state give rise to rich wave structures including delta shocks, which represent singular solutions where mass and momentum concentrate into a singular Dirac delta; these arise in zero pressure limits, modeling sticky particles; mathematically it means that a Dirac delta function appears in the solution.

		Cheng constructed the Riemann solutions of pressureless Cargo-LeRoux model and isentropic Chaplygin Cargo-LeRoux model with three and four types  of structures, respectively (see, \cite{cheng2018, cheng2019}). Kuila and his collaborators studied the Riemann solutions and their interactions for Cargo-LeRoux model with different pressure terms (see, \cite{jana2022, Kuila2022, sahoo2023, jana2025}). Kumozec and Nedeljkov (see, \cite{kumozec2024}) discussed the Riemann solutions along with shadow waves for isentropic Cargo-LeRoux model with generalized Chaplygin gas.
	Cheng and Yang (see, \cite{cheng2011}) showed the emergence of Dirac delta functions in nonlinear chromatography equations, while Jiang et al. (see, \cite{jiang2023cavitation}) examined limiting behaviors in isothermal dusty gas equations. Li and Shen (see, \cite{li2025a}), and Sun and Xin (see, \cite{sun2024}), analyzed Riemann solutions of the Aw-Rascle-Zhang traffic model under different pressure laws, highlighting concentration effects. Tong and Shen (see, \cite{tong2019}) explored the vanishing pressure limit in extended Chaplygin Euler equations, and Yang and Liu (see, \cite{yang2015}) used flux approximations to reveal concentration and cavitation in nonisentropic Euler systems. Additional studies by Sheng and Shao (see, \cite{sheng2020}) and Sun and Wei (see, \cite{sun2023}) addressed vanishing pressure and two-phase flow models, respectively. Further references on delta shock waves can be found in \cite{tong2025, neves2025,  shen2022z,  sheng2015, yang2016flux,guo2017,zhang2022delta,li2024,sarrico2024}.

	  The existence of delta shock solution is crucial in modeling real world problems like discontinuous and multi-phase astrophysical fluid flows. In the limit $A,B \to 0,$ the equation of state \eqref{m2} simplifies toward pressureless gas dynamics leading to delta shocks and vacuum states; delta shocks form when mass concentrates due to vanishing pressure while vacuum states emerge in rarefaction dominated regions. So, the generalized equation of state \eqref{m2} is crucial for modeling non-ideal compressible fluids, capturing nonlinear compressibility and negative pressure effects. These concepts come in handy for studying nonlinear phenomena stated as above. Riemann solutions, discussed in the following sections, reveal rich wave dynamics; delta shocks are particularly interesting because they involve singular solutions, often occurring when the pressure term vanishes or becomes negligible. These have practical applications in zero-pressure gas dynamics, underwater explosions, and combustion fronts.

%	   Cheng (see, \cite{cheng2018, cheng2019}) in his motivational work constructed Riemann solutions for pressureless and isentropic Chaplygin Cargo-LeRoux models, identifying multiple wave structures. Kuila et al. (see, \cite{jana2022,Kuila2022,sahoo2023}) studied wave interactions under varying pressure conditions, while Kumozec and Nedeljkov (see, \cite{kumozec2024}) analyzed Riemann solutions and shadow waves in generalized Chaplygin settings. 

	This paper is organized as follows: In section $ 2,$ we have constructed the Riemann solutions of IVP \eqref{m1}-\eqref{m2}. The limiting behaviors of the Riemann solutions are studied in section $3.$
%	The numerical simulations are presented in Section 5.
	Section $4$ presents the numerical simulations that validate the formation of single delta shock solution in the limiting case.

\section{\mbox{Riemann Solutions for the Extended Chaplygin Cargo-LeRoux model}}  
\begin{comment}
		The system \eqref{m1}, in matrix form, can be written as
	\begin{equation}\label{m7}
		\begin{bmatrix}
			\rho 
			\\ q \\u
		\end{bmatrix}_{t} + \begin{bmatrix}
			u &0 & \rho
			\\	0 & u & 0 \\ \frac{\mathcal{P}^{\prime}(\rho)}{\rho} & \frac{1}{\rho} & u
		\end{bmatrix}
		\begin{bmatrix}
			\rho 
			\\ q \\ u
		\end{bmatrix}_{x} = 0,\end{equation}
	where
	\begin{equation*}
		\mathcal{P}^{\prime}(\rho) = nA\rho^{n-1} + \frac{\alpha B}{\rho^{\alpha+1}}.
	\end{equation*} \end{comment}

	The system \eqref{m1} together with the equation of state \eqref{m2} is a strictly hyperbolic system as the Jacobian matrix has the following  three distinct eigenvalues\vspace{-0.2cm}
	\begin{equation*}
		\lambda_{1}=u-\sqrt{\mathcal{P}^{\prime}(\rho)},\quad  \lambda_{2}=u,
		  \quad\lambda_{3}=u+\sqrt{\mathcal{P}^{\prime}(\rho)} , 
	\end{equation*}
	with the corresponding eigenvectors
	\begin{equation*}
		\bm{\overrightarrow{r_{1}}}=(-\rho, 0, \sqrt{\mathcal{P}^{\prime}(\rho)})^{T}, \quad \bm{\overrightarrow{r_{2}}}=(-1, \mathcal{P}^{\prime}(\rho), 0)^{T}, \quad \bm{\overrightarrow{r_{3}}}=(\rho, 0, \sqrt{\mathcal{P}^{\prime}(\rho)})^{T}, 
	\end{equation*} where $	\mathcal{P}^{\prime}(\rho) = nA\rho^{n-1} + \frac{\alpha B}{\rho^{\alpha+1}}.$ Also, 
	\begin{equation*}
		  \bigtriangledown\lambda_{1}\cdot\bm{\overrightarrow{r_{1}}} \neq 0,  \,  \bigtriangledown\lambda_{2}\cdot\bm{\overrightarrow{r_{2}}}=0, \, \text{and} \, \bigtriangledown\lambda_{3}\cdot\bm{\overrightarrow{r_{3}}}\neq0.
	\end{equation*}
It may be noticed  that the first and third characteristic fields are genuinely nonlinear whereas the second characteristic field is linearly degenerate. Hence, the waves associated with $ \lambda_{1} $ and $ \lambda_{3} $ are either shocks or rarefaction waves and the wave associated with $ \lambda_{2} $ is a contact discontinuity.
	\subsection{Rarefaction wave and contact discontinuity}
Performing the self-similar transformation $\zeta=x/t,$ the Riemann problem presented by \eqref{m1}-\eqref{m2} transforms into the following boundary value problem
	\begin{equation}\label{m8}
		\begin{cases}
			-\zeta\rho_{\zeta} + (\rho u)_{\zeta} = 0,
			\\	-\zeta(\rho q)_{\zeta} + (\rho q u)_{\zeta} = 0,\\	-\zeta(\rho u)_{\zeta} + (\rho u^{2}+\mathcal{P}(\rho)+q)_{\zeta} = 0, 
		\end{cases}
	\end{equation}
	and
	\begin{equation}\label{m9}	(\rho, q, u) = 
		\begin{cases}
			(\rho_{\ell}, q_{\ell},  u_{\ell}),\quad\zeta=-\infty,
			\\  (\rho_{r}, q_{r}, u_{r}),\quad\zeta=+\infty.
		\end{cases}
	\end{equation}
	
%	For any smooth solution, the system \eqref{m8} in simplified form can be written as
%	\begin{equation}\label{m10}
%		\begin{bmatrix}
%			u -\zeta & 0 & \rho 
%			\\ 0 &u-\zeta & 0 \\ \frac{\mathcal{P}^{\prime}(\rho)}{\rho} &\frac{1}{\rho} & u -\zeta
%		\end{bmatrix} \begin{bmatrix}
%			\rho_{\zeta}\\ q_{\zeta} \\ u_{\zeta}
%		\end{bmatrix} = 0.
%	\end{equation}
%\begin{figure}
%	\centering
%	\small
%	\begin{tikzpicture}
%		\tikzset{->-/.style={decoration={
%					markings,
%					mark=at position .5 with {\arrow{>}}},postaction={decorate}}}
%		\draw [thick,<-](-3.7,5.1)node [right]{$t$} --(-3.7,3.9);
%		\draw[thick, ->] (-2,0)--(6,0)node [below]{$ x $};
%		\draw [thick, blue] (2,0) to (5.1,2.5);
%		\draw [thick, green] (2,0) to (-1.1,2.5);
%		\draw [thick, dashed, orange] (2,0) to (2.9,3.67);
%		\node[left] at (1.2,0.55) {$(\rho_{\ell}, q_{\ell}, u_{\ell})$};
%		\node[left, rotate=325] at (2,1) {$(\rho_{*_{1}}, q_{*_{1}}, u_{*_{1}})$};
%		\node[left, rotate=55] at (3.8,2.7) {$(\rho_{*_{2}}, q_{*_{2}}, u_{*_{2}})$};
%		\node[left] at (4.8,0.55) {$(\rho_{r}, q_{r}, u_{r})$};
%		\node[below] at (2,0) {O};
%		\node[left, rotate=48, align=center] at (0.2,4.7) {Shock wave($ \mathcal{S}_{1} $)\\or Rarefaction wave($ \mathcal{R}_{1} $)};
%		\node[left, rotate=348, align=center] at (4.5,3.8) {Contact \\discontinuity($ \mathcal{J}_{2} $)};
%		\node[left, rotate=320, align=center] at (8,0.8) {Shock wave($ \mathcal{S}_{3} $)\\or Rarefaction wave($ \mathcal{R}_{3} $)};
%	\end{tikzpicture}
%	\caption{Structure of the Riemann solution for \eqref{m1}-\eqref{m2}}
%	\label{m100}
%\end{figure}
\noindent In view of the system \eqref{m8}, we have
\\(i) for a given left state $ (\rho_{\ell}, q_{\ell}, u_{\ell}), $
the possible  states $(\rho, q, u)$ that can be connected to $ (\rho_{\ell}, q_{\ell}, u_{\ell})$
on the right by back rarefaction wave are as follows
\begin{equation}\label{m19}
	\mathcal{R}_{1}(\rho_{\ell}, q_{\ell}, u_{\ell}):\begin{cases}
		\zeta=\lambda_{1}=u-\sqrt{\mathcal{P}^{\prime}(\rho)},\\
		u=u_{\ell}-\int_{\rho_{\ell}}^{\rho} \frac{\sqrt{\mathcal{P}^{\prime}(s)}}{s} ds,
		\\ q=q_{\ell}, \quad \rho<\rho_{\ell},\quad u>u_{\ell},
	\end{cases}
\end{equation}(ii) for a fixed state $(\rho_{o}, q_{o}, u_{o}),$ any state that can be connected to $(\rho_{o}, q_{o}, u_{o})$ by contact discontinuity must be located on the following curve
\vspace{-0.2cm}
\begin{equation}\label{m23}
\mathcal{J}_{2}(\rho_{o}, q_{o}, u_{o})=\begin{cases}
	\zeta=\lambda_{2}=\sigma_{2}=u=u_{o}\\q+\mathcal{P}(\rho)=q_{o}+\mathcal{P}(\rho_{o}).
\end{cases}
\end{equation}
(iii) for a fixed right state $ (\rho_{r}, q_{r}, u_{r}), $
the possible states $(\rho, q, u)$ that can be connected to $ (\rho_{r}, q_{r}, u_{r})$
on the left by front rarefaction wave are as follows\vspace{-0.2cm}
\begin{equation}\label{m21}
	\mathcal{R}_{3}(\rho_{r}, q_{r}, u_{r}) : \begin{cases}
		\zeta=\lambda_{3}=u+\sqrt{\mathcal{P}^{\prime}(\rho)},\\
		u=u_{r}-\int_{\rho}^{\rho_{r}} \frac{\sqrt{\mathcal{P}^{\prime}(s)}}{s} ds,
		\\ q=q_{r}, \quad \rho_{r}>\rho,\quad u_{r}>u.
	\end{cases}
\end{equation}
\begin{comment}
	
	\noindent In view of the system \eqref{m8}, we have  $ 1 $-rarefaction wave \begin{equation}\label{m11}
	\mathcal{R}_{1}:\begin{cases}
		\zeta=\lambda_{1}=u-\sqrt{\mathcal{P}^{\prime}(\rho)},\\
		u=u_{\ell}-\int_{\rho_{\ell}}^{\rho} \frac{\sqrt{\mathcal{P}^{\prime}(s)}}{s} ds,
		\\ q=q_{\ell}, \quad \rho<\rho_{\ell},\quad u>u_{\ell},
	\end{cases}
\end{equation}
 or the contact discontinuity
	\begin{equation}\label{m12}
		\mathcal{J}_{2} : \begin{cases} \zeta=\lambda_{2}=u=u_{\ell},
		%	\\ u=u_{\ell},
			\\ q+\mathcal{P}(\rho)=q_{\ell}+\mathcal{P}(\rho_{\ell})
		\end{cases}
	\end{equation}
	or the $ 3 $-rarefaction wave 
	\begin{equation}\label{m13}
		\mathcal{R}_{3} : \begin{cases}
			\zeta=\lambda_{3}=u+\sqrt{\mathcal{P}^{\prime}(\rho)},\\
			u=u_{\ell}+\int_{\rho_{\ell}}^{\rho} \frac{\sqrt{\mathcal{P}^{\prime}(s)}}{s} ds,
			\\ q=q_{\ell}, \quad \rho>\rho_{\ell},\quad u>u_{\ell},
		\end{cases}
	\end{equation}
along with $\frac{d\lambda_{1}}{d \rho}<0$ and $\frac{d\lambda_{3}}{d \rho}>0.$ \end{comment}
%In addition, for  $ 1 $-rarefaction wave,  
%\begin{equation}
%	\frac{du}{d\rho}=-\frac{\sqrt{\mathcal{P}^{\prime}(\rho)}}{\rho}<0, \quad \text{and} \quad  \frac{d^{2}u}{d\rho^{2}}=\frac{-\rho \mathcal{P}^{\prime \prime}(\rho)+2 \mathcal{P}^{\prime}(\rho)}{2\rho^{2}\sqrt{\mathcal{P}^{\prime}(\rho)}}>0.
%\end{equation} Thus, the $ 1 $-rarefaction wave is strictly monotonically decreasing and convex in nature. Similarly, the $ 3 $-rarefaction wave is strictly monotonically increasing and concave in nature.

	\subsection{Shock wave}
	For a bounded discontinuity $\zeta= \sigma,$ the following Rankine-Hugoniot jump relations hold\vspace{-0.2cm}
	\begin{equation}\label{m14}
		\begin{cases}
			-\sigma[\rho]+[\rho u]=0, \\ -\sigma[\rho q]+[\rho q u]=0,\\ 	-\sigma[\rho u]+[\rho u^2 + \mathcal{P}(\rho)+q]=0,
		\end{cases}
	\end{equation}
	where $ [f]=f_{r}-f_{\ell}$ etc.
In view of \eqref{m14} and the lax entropy conditions for back and front shock waves, we have 
\\(i) for a given left state $ (\rho_{\ell}, q_{\ell}, u_{\ell}), $
the possible  states $(\rho, q, u)$ that can be connected to $ (\rho_{\ell}, q_{\ell}, u_{\ell})$
on the right by back shock wave are as follows
\begin{equation}\label{m20}
	\mathcal{S}_{1}(\rho_{\ell}, q_{\ell}, u_{\ell})=\begin{cases}
		\sigma_{1}=u-\sqrt{\frac{\rho_{\ell}(\mathcal{P}(\rho)-\mathcal{P}(\rho_{\ell}))}{\rho(\rho-\rho_{\ell})}}=u_{\ell}-\sqrt{\frac{\rho(\mathcal{P}(\rho)-\mathcal{P}(\rho_{\ell}))}{\rho_{\ell}(\rho-\rho_{\ell})}},\\u=u_{\ell}+(\rho_{\ell}-\rho)\sqrt{\frac{(\mathcal{P}(\rho)-\mathcal{P}(\rho_{\ell}))}{\rho_{\ell}\rho(\rho-\rho_{\ell})}} \\q_{\ell}=q,\quad \rho_{\ell}<\rho,
	\end{cases}
\end{equation}
(ii) for a fixed right state $ (\rho_{r}, q_{r}, u_{r}), $
the possible states $(\rho, q, u)$ that can be connected to $ (\rho_{r}, q_{r}, u_{r})$
on the left by front shock wave are as follows
\begin{equation}\label{m22}
	\mathcal{S}_{3}(\rho_{r}, q_{r}, u_{r})=\begin{cases}
		\sigma_{3}=u_{r}+\sqrt{\frac{\rho(\mathcal{P}(\rho_{r})-\mathcal{P}(\rho))}{\rho_{r}(\rho_{r}-\rho)}}=u+\sqrt{\frac{\rho_{r}(\mathcal{P}(\rho_{r})-\mathcal{P}(\rho))}{\rho(\rho_{r}-\rho)}},\\u_{r}=u-(\rho-\rho_{r})\sqrt{\frac{(\mathcal{P}(\rho_{r})-\mathcal{P}(\rho))}{\rho\rho_{r}(\rho_{r}-\rho)}} \\q=q_{r},\quad \rho>\rho_{r}.
	\end{cases}
\end{equation}  
%Also, $ 1 $-shock wave (or $ 2 $-shock wave) is monotonically decreasing (increasing) and convex (concave) in nature.

\subsection{Construction of the Riemann solution}

\noindent Let's take the projections of $ \mathcal{R}_{1}, $ $ \mathcal{S}_{1}, $ $ \mathcal{R}_{3}  $ and $ \mathcal{S}_{3}  $ on the $ (\rho, u) $-plane and denote them by $\mathcal{R}_{q}^{\ell}, $ $ \mathcal{S}_{q}^{\ell}, $ $ \mathcal{R}_{q}^{r}  $ and $ \mathcal{S}_{q}^{r},  $ respectively. In addition, denote $\mathscr{W}_{q}^{\ell}=\mathcal{R}_{q}^{\ell}\cup \mathcal{S}_{q}^{\ell}$ and $\mathscr{W}_{q}^{r}=\mathcal{R}_{q}^{r}\cup \mathcal{S}_{q}^{r}.$ Then each point $ (\rho, u)\in \mathscr{W}_{q}^{\ell}$ satisfies
\begin{equation}\label{m24}
	u=\mathtt{g}_{1}(\rho)=\begin{cases}
		u_{\ell}-\int_{\rho_{\ell}}^{\rho} \frac{\sqrt{\mathcal{P}^{\prime}(s)}}{s} ds,
		 \qquad \quad\qquad \rho \leq\rho_{\ell},\\
		 u_{\ell}+(\rho_{\ell}-\rho)\sqrt{\frac{(\mathcal{P}(\rho)-\mathcal{P}(\rho_{\ell}))}{\rho_{\ell}\rho(\rho-\rho_{\ell})}},\quad \rho_{\ell}<\rho,
	\end{cases}
\end{equation}and every point $ (\rho, u)\in \mathscr{W}_{q}^{r}$ satisfies
\begin{equation}\label{m25}
u=\mathtt{g}_{2}(\rho)=\begin{cases}
	u_{r}-\int_{\rho}^{\rho_{r}} \frac{\sqrt{\mathcal{P}^{\prime}(s)}}{s} ds,
	\qquad \quad\qquad \rho \leq\rho_{r},\\
	u_{r}+(\rho-\rho_{r})\sqrt{\frac{(\mathcal{P}(\rho_{r})-\mathcal{P}(\rho))}{\rho\rho_{r}(\rho_{r}-\rho)}},\quad \rho_{r}<\rho.
\end{cases}
\end{equation}
%One can easily see that the elementary wave curve $ \mathtt{g}_{1}(\rho) $ is strictly monotonically decreasing and convex in nature whereas the elementary wave curve $ \mathtt{g}_{2}(\rho) $ is strictly monotonically increasing and concave in nature.
One can easily see that the elementary wave curve $ \mathtt{g}_{1}(\rho) $ is strictly monotonically decreasing in nature whereas the elementary wave curve $ \mathtt{g}_{2}(\rho) $ is strictly monotonically increasing in nature. The monotonicity of the functions $ \mathtt{g}_{1}(\rho) $  and $ \mathtt{g}_{2}(\rho) $ ensure the existence of the inverse functions $ \rho = \mathtt{g}_{1}^{-1}(u) $ and $ \rho = \mathtt{g}_{2}^{-1}(u), $ respectively. In $ (\rho, u)$-plane, let $ (\rho_{k}, u_{k}) $ denote a point of intersection between $ \mathscr{W}_{q}^{\ell}$ and $ \mathscr{W}_{q}^{r}$. Further, we define
\begin{equation}\label{m26}
	\mathtt{h}_{1}(\rho)=\mathcal{P}(\rho)+q_{\ell}, \qquad \mathtt{h}_{2}(\rho)=\mathcal{P}(\rho)+q_{r},
\end{equation}and \begin{equation}\label{m40}
\mathtt{f}_{1}(u)=\mathtt{h}_{1}(\mathtt{g}_{1}^{-1}(u))=\mathcal{P}(\mathtt{g}_{1}^{-1}(u))+q_{\ell},\qquad \mathtt{f}_{2}(u)=\mathtt{h}_{2}(\mathtt{g}_{2}^{-1}(u))=\mathcal{P}(\mathtt{g}_{2}^{-1}(u))+q_{r}.
\end{equation} Clearly, $ \mathtt{f}_{1}(u) $ and $ \mathtt{f}_{2}(u) $
are monotonically decreasing and increasing functions, respectively. In addition, one can have\vspace{-0.2cm}
\begin{equation}\label{m27}
	 \begin{cases}
	 	\mathtt{f}_{1}(-\infty)= \mathtt{h}_{1}(\mathtt{g}_{1}^{-1}(-\infty))=+\infty,\\
	 	\mathtt{f}_{2}(-\infty)= \mathtt{h}_{2}(\mathtt{g}_{2}^{-1}(-\infty))=-\infty,\\
	 	\mathtt{f}_{1}(+\infty)= \mathtt{h}_{1}(\mathtt{g}_{1}^{-1}(+\infty))=-\infty,\\
	 	\mathtt{f}_{2}(+\infty)= \mathtt{h}_{2}(\mathtt{g}_{2}^{-1}(+\infty))=+\infty.
	 \end{cases}
	 \end{equation}
	 \begin{figure}[h!]
	 	\centering
	 	\small
	 	\begin{tikzpicture}
	 		\tikzset{->-/.style={decoration={
	 					markings,
	 					mark=at position .5 with {\arrow{>}}},postaction={decorate}}}
	 		\draw [thick,<-](-3.7,5.1)node [right]{$t$} --(-3.7,3.9);
	 		\draw[thick, ->] (-2,0)--(6,0)node [below]{$ x $};
	 		\draw [thick, blue] (2,0) to (5.1,2.5);
	 		\draw [thick, green] (2,0) to (-1.1,2.5);
	 		\draw [thick, dashed, orange] (2,0) to (2.9,3.67);
	 		\node[left] at (1.2,0.55) {$(\rho_{\ell}, q_{\ell}, u_{\ell})$};
	 		\node[left, rotate=325] at (2,1) {$(\rho_{*_{1}}, q_{*_{1}}, u_{*_{1}})$};
	 		\node[left, rotate=55] at (3.8,2.7) {$(\rho_{*_{2}}, q_{*_{2}}, u_{*_{2}})$};
	 		\node[left] at (4.8,0.55) {$(\rho_{r}, q_{r}, u_{r})$};
	 		\node[below] at (2,0) {O};
	 		\node[left, rotate=48, align=center] at (0.2,4.7) {Shock wave($ \mathcal{S}_{1} $)\\or Rarefaction wave($ \mathcal{R}_{1} $)};
	 		\node[left, rotate=348, align=center] at (4.5,3.8) {Contact \\discontinuity($ \mathcal{J}_{2} $)};
	 		\node[left, rotate=320, align=center] at (8,0.8) {Shock wave($ \mathcal{S}_{3} $)\\or Rarefaction wave($ \mathcal{R}_{3} $)};
	 	\end{tikzpicture}
	 	\caption{Structure of the Riemann solution for \eqref{m1}-\eqref{m2}}
	 	\label{m100}
	 \end{figure}
	 \begin{figure}
	 	\centering
	 	
	 	\begin{minipage}[b]{0.5\textwidth}
	 		\resizebox{0.8\textwidth}{!}{
	 			\begin{tikzpicture}
	 				\tikzset{->-/.style={decoration={
	 							markings,
	 							mark=at position .5 with {\arrow{>}}},postaction={decorate}}}
	 				
	 				\draw [ultra thick,<-](-4.5,5.3)node [left]{$u$} --(-4.5,-4.2);
	 				\draw[ultra thick, ->] (-4.5,-1.8)--(6,-1.8)node [below]{$ \rho $};
	 				\draw [ultra thick, magenta] (-4.34,-4.1) to [out=80,in=595](-3,-0.50);
	 				\draw [ultra thick, green] (-3,-0.50) to [out=53,in=550](5.4,4);
	 				\draw [ultra thick, orange] (-4.34,5) to [out=280,in=120](-3.2,1.5);
	 				\draw [ultra thick, blue] (-3.2,1.5) to [out=300,in=168](5.4,-3.9);
	 				\draw [ultra thick, dashed, gray] (-2.7,4.5) to [out=280,in=160](2.5,-1);
	 				\draw [ultra thick, dashed, gray] (-2,-1) to [out=60,in=560](3.5,3);
	 				\node[left] at (-3.6,4.9) {$ \mathcal{R}_{q}^{\ell}$};
	 				\node[right] at (4.85,4.25) {$ \mathcal{S}_{q}^{r}$};
	 				\node[above] at (-3.95,-4.4) {$ \mathcal{R}_{q}^{r}$};
	 				\node[above] at (5.1,-4.6) {$ \mathcal{S}_{q}^{\ell}$};
	 				\node at (-3.2,1.5) {$ \bullet $};
	 				\node at (-2.6,4.7) {$ \mathtt{f}_{1}(u) $};
	 				\node at (3.4,2.6) {$ \mathtt{f}_{2}(u) $};
	 				\draw [ultra thick, dashed, red] (-4.5,0.75) to (3,0.75);
	 				\node at (-2.38,0.27) {$ \bullet $};
	 				\node at (-2.74,0.73) {$ \bullet $};
	 				\node at (-1.9,0.73) {$ \bullet $};	
	 				\node[left] at (-3,-0.55) {$ (\rho_{r}, u_{r}) $};
	 				\node at (-3.8,1) {$ (\rho_{*_{1}}, u_{*_{1}}) $};
	 				\node[above] at (-1.7,0.75) {$ (\rho_{*_{2}}, u_{*_{2}}) $};
	 				\node at (-2.5,1.5) {$ (\rho_{\ell}, u_{\ell}) $};
	 				\node at (0.1,0.48) {$ (\mathtt{f}_{1}(\tilde{u}),  \tilde{u}) $};
	 				\node[left] at (-2.4,0.24) {$ (\rho_{k}, u_{k}) $};
	 				\node at (-3,-0.50) {$ \bullet $};
	 				\node at (-0.65,0.73) {$ \bullet $};
	 				
	 			\end{tikzpicture}
	 		}
	 	\end{minipage}\hfill
	 	\begin{minipage}[b]{0.4\textwidth}
	 		\resizebox{0.92\textwidth}{!}{
	 			\begin{tikzpicture}
	 				\tikzset{->-/.style={decoration={
	 							markings,
	 							mark=at position .5 with {\arrow{>}}},postaction={decorate}}}
	 				%	\draw [help lines] (-1,0) grid (10,10);
	 				\draw [ultra thick,<-](0,8.3)node [right]{$u$} --(0,6);
	 				\draw[ultra thick, ->] (0.6,0.8)--(8.3,0.8)node [below]{$ \rho$};
	 				\draw [ultra thick, blue] (2.4,6.1) to [out=285,in=145](3,5.2);
	 				\draw[ultra thick, dashed, red] (3,5.2) to (5.7,5.2);
	 				\draw[ultra thick, green](5.7,5.2)[out=220,in=430] to (4.2,3);
	 				\node at (3,5.2) {$ \bullet $};
	 				\node at (2.4,6.1) {$ \bullet $};
	 				\node at (3,5.2) {$ \bullet $};
	 				\node at (5.7,5.2) {$ \bullet $};
	 				\node at (4.2,3) {$ \bullet $};
	 				\node[left] at (3.1,5) {$ (\rho_{*_{1}}, u_{*_{1}}) $};
	 				\node[right] at (5.7,5.2) {$ (\rho_{*_{2}}, u_{*_{2}}) $};
	 				\node[right] at (4.2,3) {$ (\rho_{r}, u_{r}) $};
	 				\node[left] at (2.4,6.1) {$ (\rho_{\ell}, u_{\ell}) $};
	 				\node at (2.15,5.57) {$ \mathcal{S}_{q}^{\ell} $};
	 				\node[above] at (5.25,3.8) {$ \mathcal{S}_{q}^{r} $};
	 				\node[above] at (4.47,5.2) {$ \mathcal{J}_{2} $};
	 				\draw[ultra thick, ->] (4.4,5.2)--(4.47,5.2);
	 				\draw[ultra thick, ->] (4.81,4.25)--(4.77,4.2);
	 				\draw[ultra thick, ->] (2.59,5.6)--(2.62,5.55);
	 		\end{tikzpicture}}
	 		
	 	\end{minipage}
	 	
	 	\caption{Riemann solution whenever $ (\rho_{k}, u_{k})\in \mathcal{S}_{q}^{\ell}$, $ (\rho_{k}, u_{k})\in \mathcal{S}_{q}^{r}$, $ q_{\ell}>q_{r} $ and $\mathtt{f}_{1}(u_{\ell})<\mathtt{f}_{2}(u_{\ell})$}
	 	\label{m125}
	 \end{figure}

	 \begin{figure}
	 	\centering
	 	
	 	\begin{minipage}[b]{0.5\textwidth}
	 		\resizebox{0.8\textwidth}{!}{
	 			\begin{tikzpicture}
	 				\tikzset{->-/.style={decoration={
	 							markings,
	 							mark=at position .5 with {\arrow{>}}},postaction={decorate}}}
	 				
	 				\draw [ultra thick,<-](-4.5,5.3)node [left]{$u$} --(-4.5,-4.2);
	 				\draw[ultra thick, ->] (-4.5,-1.8)--(6,-1.8)node [below]{$ \rho $};
	 				\draw [ultra thick, magenta] (-4.34,-4.1) to [out=80,in=595](-3,-0.50);
	 				\draw [ultra thick, green] (-3,-0.50) to [out=53,in=550](5.4,4);
	 				\draw [ultra thick, orange] (-4.34,5) to [out=280,in=120](-3.2,1.5);
	 				\draw [ultra thick, blue] (-3.2,1.5) to [out=300,in=168](5.4,-3.9);
	 				\draw [ultra thick, dashed, gray] (-2.7,4.5) to [out=280,in=160](2.5,-1);
	 				\draw [ultra thick, dashed, gray] (-4,-1) to [out=60,in=560](2.5,4.4);
	 				\node[left] at (-3.6,4.9) {$ \mathcal{R}_{q}^{\ell}$};
	 				\node[right] at (4.85,4.25) {$ \mathcal{S}_{q}^{r}$};
	 				\node[above] at (-3.95,-4.4) {$ \mathcal{R}_{q}^{r}$};
	 				\node[above] at (5.1,-4.6) {$ \mathcal{S}_{q}^{\ell}$};
	 				\node at (-3.2,1.5) {$ \bullet $};
	 				\node at (-2.6,4.7) {$ \mathtt{f}_{1}(u) $};
	 				\node at (3,4.4) {$ \mathtt{f}_{2}(u) $};
	 				\draw [ultra thick, dashed, red] (-4.5,1.97) to (3,1.97);
	 				\node at (-3,-0.50) {$ \bullet $};
	 				\node at (-2.38,0.25) {$ \bullet $};
	 				\node at (-1.72,1.97) {$ \bullet $};	
	 				\node[right] at (-3.18,-0.75) {$ (\rho_{r}, u_{r}) $};
	 				\node at (-2.8,2.3) {$ (\rho_{*_{1}}, u_{*_{1}}) $};
	 				\node[above] at (0.2,1.33) {$ (\rho_{*_{2}}, u_{*_{2}}) $};
	 				\node at (-3.85,1.5) {$ (\rho_{\ell}, u_{\ell}) $};
	 				\node at (-1.25,2.25) {$ (\mathtt{f}_{1}(\tilde{u}),  \tilde{u}) $};
	 				\node[right] at (-2.35,0.24) {$ (\rho_{k}, u_{k}) $};
	 				\node at (-3.45,1.97) {$ \bullet $};
	 				\node at (-0.43,1.97) {$ \bullet $};		
	 		\end{tikzpicture}}
	 	\end{minipage}\hfill
	 	\begin{minipage}[b]{0.4\textwidth}
	 		\resizebox{0.92\textwidth}{!}{
	 			\begin{tikzpicture}
	 				\tikzset{->-/.style={decoration={
	 							markings,
	 							mark=at position .5 with {\arrow{>}}},postaction={decorate}}}
	 				%	\draw [help lines] (-1,0) grid (10,10);
	 				\draw [ultra thick,<-](0,8.4)node [right]{$u$} --(0,6);
	 				\draw[ultra thick, ->] (0.6,0.8)--(8.3,0.8)node [below]{$ \rho$};
	 				\draw [ultra thick, orange] (3,5.2) to [out=285,in=145](3.6,4.3);
	 				\draw[ultra thick, dashed, red] (3,5.2) to (6,5.2);
	 				\draw[ultra thick, green](6,5.2)[out=220,in=430] to (4.5,3);
	 				\node at (3,5.2) {$ \bullet $};
	 				\node at (3.6,4.3) {$ \bullet $};
	 				\node at (3,5.2) {$ \bullet $};
	 				\node at (6,5.2) {$ \bullet $};
	 				\node at (4.5,3) {$ \bullet $};
	 				\node[left] at (3,5.2) {$ (\rho_{*_{1}}, u_{*_{1}}) $};
	 				\node[right] at (6,5.2) {$ (\rho_{*_{2}}, u_{*_{2}}) $};
	 				\node[right] at (4.5,3) {$ (\rho_{r}, u_{r}) $};
	 				\node[left] at (3.6,4.2) {$ (\rho_{\ell}, u_{\ell}) $};
	 				\node at (2.8,4.7) {$ \mathcal{R}_{q}^{\ell} $};
	 				\node[above] at (5.45,3.8) {$ \mathcal{S}_{q}^{r} $};
	 				\node[above] at (4.47,5.2) {$ \mathcal{J}_{2} $};
	 				\draw[ultra thick, ->] (4.4,5.2)--(4.47,5.2);
	 				\draw[ultra thick, ->] (5.1,4.25)--(5.06,4.2);
	 				\draw[ultra thick, <-] (3.155,4.8)--(3.185,4.75);
	 		\end{tikzpicture}}	
	 	\end{minipage}
	 	
	 	\caption{Riemann solution whenever $ (\rho_{k}, u_{k})\in \mathcal{S}_{q}^{\ell}$, $ (\rho_{k}, u_{k})\in \mathcal{S}_{q}^{r}$, $ q_{\ell}>q_{r} $ and $\mathtt{f}_{1}(u_{\ell})>\mathtt{f}_{2}(u_{\ell})$}
	 	\label{m126}
	 \end{figure}

	 \begin{figure}
	 	\centering
	 	
	 	\begin{minipage}[b]{0.5\textwidth}
	 		\resizebox{0.8\textwidth}{!}{
	 			\begin{tikzpicture}
	 				\tikzset{->-/.style={decoration={
	 							markings,
	 							mark=at position .5 with {\arrow{>}}},postaction={decorate}}}
	 				
	 				\draw [ultra thick,<-](-4.5,5.3)node [left]{$u$} --(-4.5,-4.2);
	 				\draw[ultra thick, ->] (-4.5,-1.8)--(6,-1.8)node [below]{$ \rho $};
	 				\draw [ultra thick, magenta] (-4.34,-4.1) to [out=80,in=595](-3,-0.50);
	 				\draw [ultra thick, green] (-3,-0.50) to [out=53,in=550](5.4,4);
	 				\draw [ultra thick, orange] (-4.34,5) to [out=280,in=120](-3.2,1.5);
	 				\draw [ultra thick, blue] (-3.2,1.5) to [out=300,in=168](5.4,-3.9);
	 				\draw [ultra thick, dashed, gray] (-2.7,2.5) to [out=280,in=160](3,-2.8);
	 				\draw [ultra thick, dashed, gray] (-3,-2) to [out=60,in=560](2.5,2);
	 				\node[left] at (-3.6,4.9) {$ \mathcal{R}_{q}^{\ell}$};
	 				\node[right] at (4.85,4.25) {$ \mathcal{S}_{q}^{r}$};
	 				\node[above] at (-3.95,-4.4) {$ \mathcal{R}_{q}^{r}$};
	 				\node[above] at (5.1,-4.6) {$ \mathcal{S}_{q}^{\ell}$};
	 				\node at (-3.2,1.5) {$ \bullet $};
	 				\node at (-2.6,3) {$ \mathtt{f}_{1}(u) $};
	 				\node at (3.05,2.05) {$ \mathtt{f}_{2}(u) $};
	 				\draw [ultra thick, dashed, red] (-4.5,-0.13) to (3,-0.13);
	 				\node at (-2.38,0.27) {$ \bullet $};
	 				\node at (-2.7,-0.13) {$ \bullet $};
	 				\node at (-2.1,-0.13) {$ \bullet $};	
	 				\node[left] at (-3,-0.55) {$ (\rho_{r}, u_{r}) $};
	 				\node at (-1.8,-0.38) {$ (\rho_{*_{1}}, u_{*_{1}}) $};
	 				\node[above] at (-2,0.3) {$ (\rho_{k}, u_{k}) $};
	 				\node at (-3.85,1.5) {$ (\rho_{\ell}, u_{\ell}) $};
	 				\node at (-0.9,0.15) {$ (\mathtt{f}_{1}(\hat{u}),  \hat{u}) $};
	 				\node[left] at (-2.56,0.13) {$ (\rho_{*_{2}}, u_{*_{2}}) $};
	 				\node at (-3,-0.50) {$ \bullet $};
	 				\node at (-1.5,-0.13) {$ \bullet $};
	 				
	 			\end{tikzpicture}
	 		}
	 	\end{minipage}\hfill
	 	\begin{minipage}[b]{0.4\textwidth}
	 		\resizebox{0.92\textwidth}{!}{
	 			\begin{tikzpicture}
	 				\tikzset{->-/.style={decoration={
	 							markings,
	 							mark=at position .5 with {\arrow{>}}},postaction={decorate}}}
	 				%	\draw [help lines] (-1,0) grid (10,10);
	 				\draw [ultra thick,<-](0,8.5)node [right]{$u$} --(0,5.8);
	 				\draw[ultra thick, ->] (0.6,0.8)--(8.3,0.8)node [below]{$ \rho$};
	 				\draw [ultra thick, blue] (4.4,7) to [out=285,in=145](5.7,5.2);
	 				\draw[ultra thick, dashed, red] (3,5.2) to (5.7,5.2);
	 				\draw[ultra thick, green](3,5.2)[out=220,in=430] to (2.2,3.8);
	 				\node at (3,5.2) {$ \bullet $};
	 				\node at (4.4,7) {$ \bullet $};
	 				\node at (2.2,3.8) {$ \bullet $};
	 				\node at (5.7,5.2) {$ \bullet $};
	 				%\node at (4.2,3) {$ \bullet $};
	 				\node[left] at (2.95,5.25) {$ (\rho_{*_{2}}, u_{*_{2}}) $};
	 				\node[right] at (5.7,5.2) {$ (\rho_{*_{1}}, u_{*_{1}}) $};
	 				\node[left] at (2.2,3.8) {$ (\rho_{r}, u_{r}) $};
	 				\node[left] at (4.45,7) {$ (\rho_{\ell}, u_{\ell}) $};
	 				\node at (2.15,4.5) {$ \mathcal{S}_{q}^{r} $};
	 				\node[above] at (5.1,5.8) {$ \mathcal{S}_{q}^{\ell} $};
	 				\node[above] at (4.4,5.2) {$ \mathcal{J}_{2} $};
	 				\draw[ultra thick, <-] (4.4,5.2)--(4.47,5.2);
	 				\draw[ultra thick, ->] (4.78,6.1)--(4.82,6.05);
	 				\draw[ultra thick, <-] (2.49,4.6)--(2.53,4.65);
	 		\end{tikzpicture}}
	 		
	 	\end{minipage}
	 	
	 	\caption{Riemann solution whenever $ (\rho_{k}, u_{k})\in \mathcal{S}_{q}^{\ell}$, $ (\rho_{k}, u_{k})\in \mathcal{S}_{q}^{r}$, $ q_{\ell}<q_{r} $ and $\mathtt{f}_{1}(u_{r})>\mathtt{f}_{2}(u_{r})$}
	 	\label{m127}
	 \end{figure}

	 \begin{figure}
	 	\centering
	 	
	 	\begin{minipage}[b]{0.5\textwidth}
	 		\resizebox{0.8\textwidth}{!}{
	 			\begin{tikzpicture}
	 				\tikzset{->-/.style={decoration={
	 							markings,
	 							mark=at position .5 with {\arrow{>}}},postaction={decorate}}}
	 				
	 				\draw [ultra thick,<-](-4.5,5.3)node [left]{$u$} --(-4.5,-4.2);
	 				\draw[ultra thick, ->] (-4.5,-1.8)--(6,-1.8)node [below]{$ \rho $};
	 				\draw [ultra thick, magenta] (-4.34,-4.1) to [out=80,in=595](-3,-0.50);
	 				\draw [ultra thick, green] (-3,-0.50) to [out=53,in=550](5.4,4);
	 				\draw [ultra thick, orange] (-4.34,5) to [out=280,in=120](-3.2,1.5);
	 				\draw [ultra thick, blue] (-3.2,1.5) to [out=300,in=168](5.4,-3.9);
	 				\draw [ultra thick, dashed, gray] (-2.7,2.5) to [out=280,in=160](3,-2.8);
	 				\draw [ultra thick, dashed, gray] (-1.9,-2.7) to [out=60,in=560](3.2,1.5);
	 				\node[left] at (-3.6,4.9) {$ \mathcal{R}_{q}^{\ell}$};
	 				\node[right] at (4.85,4.25) {$ \mathcal{S}_{q}^{r}$};
	 				\node[above] at (-3.95,-4.4) {$ \mathcal{R}_{q}^{r}$};
	 				\node[above] at (5.1,-4.6) {$ \mathcal{S}_{q}^{\ell}$};
	 				\node at (-3.2,1.5) {$ \bullet $};
	 				\node at (-2.6,2.8) {$ \mathtt{f}_{1}(u) $};
	 				\node at (3.05,1.7) {$ \mathtt{f}_{2}(u) $};
	 				\draw [ultra thick, dashed, red] (-4.5,-0.95) to (3,-0.95);
	 				\node at (-2.38,0.27) {$ \bullet $};
	 				\node at (-3.27,-0.95) {$ \bullet $};
	 				\node at (-1.3,-0.95) {$ \bullet $};	
	 				\node[left] at (-2.87,-0.35) {$ (\rho_{r}, u_{r}) $};
	 				\node at (-1.8,-1.2) {$ (\rho_{*_{1}}, u_{*_{1}}) $};
	 				\node[above] at (-3.2,0) {$ (\rho_{k}, u_{k}) $};
	 				\node at (-3.85,1.5) {$ (\rho_{\ell}, u_{\ell}) $};
	 				\node at (-0.1,-0.7) {$ (\mathtt{f}_{1}(\hat{u}),  \hat{u}) $};
	 				\node[left] at (-2.67,-1.2) {$ (\rho_{*_{2}}, u_{*_{2}}) $};
	 				\node at (-3,-0.50) {$ \bullet $};
	 				\node at (-0.67,-0.95) {$ \bullet $};
	 				
	 			\end{tikzpicture}
	 		}
	 	\end{minipage}\hfill
	 	\begin{minipage}[b]{0.4\textwidth}
	 		\resizebox{0.92\textwidth}{!}{
	 			\begin{tikzpicture}
	 				\tikzset{->-/.style={decoration={
	 							markings,
	 							mark=at position .5 with {\arrow{>}}},postaction={decorate}}}
	 				%	\draw [help lines] (-1,0) grid (10,10);
	 				\draw [ultra thick,<-](0,10)node [right]{$u$} --(0,7.5);
	 				\draw[ultra thick, ->] (0.6,2.3)--(8.3,2.3)node [below]{$ \rho$};
	 				\draw [ultra thick, blue] (4.4,7) to [out=285,in=145](5.7,5.2);
	 				\draw[ultra thick, dashed, red] (3,5.2) to (5.7,5.2);
	 				\draw[ultra thick, magenta](3.5,6)[out=220,in=430] to (3,5.2);
	 				\node at (3,5.2) {$ \bullet $};
	 				\node at (4.4,7) {$ \bullet $};
	 				\node at (3.5,6) {$ \bullet $};
	 				\node at (5.7,5.2) {$ \bullet $};
	 				%\node at (4.2,3) {$ \bullet $};
	 				\node[left] at (3.1,5) {$ (\rho_{*_{2}}, u_{*_{2}}) $};
	 				\node[right] at (5.7,5.2) {$ (\rho_{*_{1}}, u_{*_{1}}) $};
	 				\node[left] at (3.6,6.2) {$ (\rho_{r}, u_{r}) $};
	 				\node[right] at (4.3,7.2) {$ (\rho_{\ell}, u_{\ell}) $};
	 				\node at (2.8,5.6) {$ \mathcal{R}_{q}^{r} $};
	 				\node[above] at (5.1,5.8) {$ \mathcal{S}_{q}^{\ell} $};
	 				\node[above] at (4.4,5.2) {$ \mathcal{J}_{2} $};
	 				\draw[ultra thick, <-] (4.4,5.2)--(4.47,5.2);
	 				\draw[ultra thick, ->] (4.78,6.1)--(4.82,6.05);
	 				\draw[ultra thick, ->] (3.19,5.6)--(3.22,5.65);
	 		\end{tikzpicture}}
	 		
	 	\end{minipage}
	 	
	 	\caption{Riemann solution whenever $ (\rho_{k}, u_{k})\in \mathcal{S}_{q}^{\ell}$, $ (\rho_{k}, u_{k})\in \mathcal{S}_{q}^{r}$, $ q_{\ell}<q_{r} $ and $\mathtt{f}_{1}(u_{r})<\mathtt{f}_{2}(u_{r})$}
	 	\label{m128}
	 \end{figure}
 \begin{theorem}
For any given initial data \eqref{m3}, there exists a unique pair of points $ (\rho_{*_{1}}, q_{*_{1}}, u_{*_{1}}) \in \mathscr{W}_{q}^{\ell} $ and $ (\rho_{*_{2}}, q_{*_{2}}, u_{*_{2}}) \in \mathscr{W}_{q}^{r} $ satisfying\vspace{-0.2cm}
\begin{equation*}
	u_{*_{1}}=u_{*_{2}}, \qquad h_{1}(\rho_{*_{1}})=h_{2}(\rho_{*_{2}}).
\end{equation*}
\end{theorem}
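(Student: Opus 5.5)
The plan is to reduce the statement to finding a single zero of a strictly monotone continuous function of one real variable. The intermediate states are taken on the two projected wave curves: $(\rho_{*_1},u_{*_1})\in\mathscr{W}_{q}^{\ell}$ with $q_{*_1}=q_\ell$, and $(\rho_{*_2},u_{*_2})\in\mathscr{W}_{q}^{r}$ with $q_{*_2}=q_r$. These choices of $q$ are forced because $q$ is constant across $\mathcal{R}_1,\mathcal{S}_1,\mathcal{R}_3,\mathcal{S}_3$. On the curves, $\rho_{*_1}=\mathtt{g}_1^{-1}(u_{*_1})$ and $\rho_{*_2}=\mathtt{g}_2^{-1}(u_{*_2})$, and these inverses exist by the monotonicity of $\mathtt{g}_1,\mathtt{g}_2$. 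Imposing $u_{*_1}=u_{*_2}=:u$, the second condition $\mathtt{h}_1(\rho_{*_1})=\mathtt{h}_2(\rho_{*_2})$ becomes $\mathtt{f}_1(u)=\mathtt{f}_2(u)$, with $\mathtt{f}_1,\mathtt{f}_2$ as in \eqref{m40}. Hence existence and uniqueness of the pair are equivalent to existence and uniqueness of a root of
\[
F(u):=\mathtt{f}_1(u)-\mathtt{f}_2(u),\qquad u\in\mathbb{R}.
\]

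Next I would establish that $F$ is continuous and strictly decreasing. For continuity, note that $\mathtt{g}_1$ is continuous at $\rho=\rho_\ell$, since both branches of \eqref{m24} give $u_\ell$ there, and the same holds for $\mathtt{g}_2$ at $\rho_r$. Both functions are strictly monotone, so their inverses are continuous. Composing with the continuous, strictly increasing $\mathcal{P}$ (recall $\mathcal{P}'>0$ for $\rho>0$) shows that $\mathtt{f}_1$ is continuous and strictly decreasing, while $\mathtt{f}_2$ is continuous and strictly increasing. Therefore $F$ is strictly decreasing. Together with the limits \eqref{m27}, this gives $F(-\infty)=+\infty$ and $F(+\infty)=-\infty$. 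The intermediate value theorem then yields a root $u_*$, and strict monotonicity makes it unique. Setting $\rho_{*_1}=\mathtt{g}_1^{-1}(u_*)$ and $\rho_{*_2}=\mathtt{g}_2^{-1}(u_*)$ completes the construction. Any other admissible pair would produce a second root of $F$, which is impossible.

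The main obstacle is justifying \eqref{m27}, in particular the surjectivity of $\mathtt{g}_1,\mathtt{g}_2$ onto $\mathbb{R}$. Without surjectivity the inverses are not defined on all of $\mathbb{R}$ and $F$ may have no root.
\begin{itemize}
\item For $\rho\to0^+$, I would show that $\int_0^{\rho_\ell}\sqrt{\mathcal{P}'(s)}/s\,ds$ diverges. This holds because $\sqrt{\mathcal{P}'(s)}/s\ge \sqrt{\alpha B}\,s^{-(\alpha+3)/2}$ and $(\alpha+3)/2>1$. It requires $B>0$; if $B=0$, the vacuum case must be handled separately.
\item For $\rho\to\infty$ on the shock branch, the term $(\rho_\ell-\rho)\sqrt{(\mathcal{P}(\rho)-\mathcal{P}(\rho_\ell))/(\rho_\ell\rho(\rho-\rho_\ell))}$ behaves like $-\sqrt{A/\rho_\ell}\,\rho^{n/2}$. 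This tends to $-\infty$ when $A>0$, and the case $n=1$ also works.
\end{itemize}
These give $\mathtt{g}_1(0^+)=+\infty$ and $\mathtt{g}_1(+\infty)=-\infty$, and symmetrically for $\mathtt{g}_2$. Then $\mathcal{P}(0^+)=-\infty$ and $\mathcal{P}(+\infty)=+\infty$ deliver \eqref{m27}.

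I also expect to have to verify the strict monotonicity of the shock branch of $\mathtt{g}_1$ on $\rho>\rho_\ell$, which the paper only asserts. The approach would be to write $u-u_\ell=-\sqrt{(\rho-\rho_\ell)(\mathcal{P}(\rho)-\mathcal{P}(\rho_\ell))/(\rho\rho_\ell)}$ and check that the radicand is increasing in $\rho$. This follows since $(\rho-\rho_\ell)/\rho$ and $\mathcal{P}(\rho)-\mathcal{P}(\rho_\ell)$ are both positive and increasing. The analogous argument applies to $\mathtt{g}_2$.
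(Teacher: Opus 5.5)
Your argument is correct and is essentially the paper's: both reduce the statement to a unique crossing of the strictly decreasing $\mathtt{f}_1$ and the strictly increasing $\mathtt{f}_2$ via the limits \eqref{m27}. The only difference is that the paper anchors the search at the wave-curve intersection $u_k$, where $\mathtt{f}_1(u_k)-\mathtt{f}_2(u_k)=q_\ell-q_r$, so the root lies in $(u_k,+\infty)$ or $(-\infty,u_k)$. Your global sign argument on $F$ needs neither $(\rho_k,u_k)$ nor that case split, and it also covers $q_\ell=q_r$, which the paper's two cases omit. Your checks of continuity, of shock-branch monotonicity and of surjectivity of $\mathtt{g}_1,\mathtt{g}_2$ fill in steps the paper only asserts, and they correctly show the theorem tacitly needs $A>0$ and $B>0$: for $A=0$ the shock branches stay bounded, and for $B=0$, $n>1$ a vacuum can occur.
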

\begin{proof}
	The proof is divided into the following two cases:
	\\ $(i)$ If $q_{\ell}>q_{r},$ then $ \mathtt{f}_{1}(u_{k})=\mathtt{h}_{1}(\rho_{k})>\mathtt{h}_{2}(\rho_{k})= \mathtt{f}_{2}(u_{k}) $ implies that there exists a unique $ \tilde{u} \in (u_{k}, +\infty) $ such that $ \mathtt{f}_{1}(\tilde{u})= \mathtt{f}_{2}(\tilde{u}) .$ Thus, we have $ (\rho_{*_{1}}, u_{*_{1}})=(\mathtt{g}_{1}^{-1}(\tilde{u}), \tilde{u}) $ and $ (\rho_{*_{2}}, u_{*_{2}})=(\mathtt{g}_{2}^{-1}(\tilde{u}), \tilde{u}). $ 
		\\ $(ii)$ If $q_{\ell}<q_{r},$ then $ \mathtt{f}_{1}(u_{k})=\mathtt{h}_{1}(\rho_{k})<\mathtt{h}_{2}(\rho_{k})= \mathtt{f}_{2}(u_{k}) $ gives that there exists a unique $ \hat{u} \in (-\infty, u_{k}) $ such that $ \mathtt{f}_{1}(\hat{u})= \mathtt{f}_{2}(\hat{u}) .$ Hence, we have $ (\rho_{*_{1}}, u_{*_{1}})=(\mathtt{g}_{1}^{-1}(\hat{u}), \hat{u}) $ and $ (\rho_{*_{2}}, u_{*_{2}})=(\mathtt{g}_{2}^{-1}(\hat{u}), \hat{u}). $ 
\end{proof}	
 \begin{table}[h!]
	\scriptsize
	\centering
	\begin{tabularx}{\textwidth}{|l|X|X|X|X|}
		\hline
		& \bm{$\mathcal{A}:$} $(\rho_k, u_k) \in \mathcal{S}_q^\ell,\, \mathcal{S}_q^r$ & \bm{$\mathcal{B}:$} $(\rho_k, u_k) \in \mathcal{R}_q^\ell,\, \mathcal{R}_q^r$ & \bm{$\mathcal{C}:$} $(\rho_k, u_k) \in \mathcal{S}_q^\ell,\, \mathcal{R}_q^r$ & \bm{$\mathcal{D}:$} $(\rho_k, u_k) \in \mathcal{R}_q^\ell,\, \mathcal{S}_q^r$ \\
		\hline
		
		$q_\ell > q_r$ 
		&
		(i) $\mathtt{f}_1(u_\ell) < \mathtt{f}_2(u_\ell):\, \mathcal{S}_1 \mathcal{J}_2 \mathcal{S}_3$ \newline
		(ii) $\mathtt{f}_1(u_\ell) > \mathtt{f}_2(u_\ell):\, \mathcal{R}_1 \mathcal{J}_2 \mathcal{S}_3$
		&
		(i) $\mathtt{f}_1(u_r) < \mathtt{f}_2(u_r):\, \mathcal{R}_1 \mathcal{J}_2 \mathcal{R}_3$ \newline
		(ii) $\mathtt{f}_1(u_r) > \mathtt{f}_2(u_r):\, \mathcal{R}_1 \mathcal{J}_2 \mathcal{S}_3$
		&
		(i) $u_\ell \geq u_r,\ \mathtt{f}_1(u_r) < \mathtt{f}_2(u_r):\, \mathcal{S}_1 \mathcal{J}_2 \mathcal{R}_3$ \newline
		(ii) $u_\ell > u_r,\ \mathtt{f}_1(u_r) > \mathtt{f}_2(u_r),\ \mathtt{f}_1(u_\ell) < \mathtt{f}_2(u_\ell):\, \mathcal{S}_1 \mathcal{J}_2 \mathcal{S}_3$ \newline
		(iii) $u_\ell > u_r,\ \mathtt{f}_1(u_\ell) > \mathtt{f}_2(u_\ell):\, \mathcal{R}_1 \mathcal{J}_2 \mathcal{S}_3$ \newline
		(iv) $u_\ell < u_r,\ \mathtt{f}_1(u_\ell) < \mathtt{f}_2(u_\ell):\, \mathcal{S}_1 \mathcal{J}_2 \mathcal{R}_3$ \newline
		(v) $u_\ell < u_r,\ \mathtt{f}_1(u_\ell) > \mathtt{f}_2(u_\ell),\ \mathtt{f}_1(u_r) < \mathtt{f}_2(u_r):\, \mathcal{R}_1 \mathcal{J}_2 \mathcal{R}_3$ \newline
		(vi) $u_\ell \leq u_r,\ \mathtt{f}_1(u_r) > \mathtt{f}_2(u_r):\, \mathcal{R}_1 \mathcal{J}_2 \mathcal{S}_3$
		&
		$\mathcal{R}_1 \mathcal{J}_2 \mathcal{S}_3$
		\\
		\hline
		
		$q_\ell < q_r$
		&
		(i) $\mathtt{f}_1(u_r) > \mathtt{f}_2(u_r):\, \mathcal{S}_1 \mathcal{J}_2 \mathcal{S}_3$ \newline
		(ii) $\mathtt{f}_1(u_r) < \mathtt{f}_2(u_r):\, \mathcal{S}_1 \mathcal{J}_2 \mathcal{R}_3$
		&
		(i) $\mathtt{f}_1(u_\ell) > \mathtt{f}_2(u_\ell):\, \mathcal{R}_1 \mathcal{J}_2 \mathcal{R}_3$ \newline
		(ii) $\mathtt{f}_1(u_\ell) < \mathtt{f}_2(u_\ell):\, \mathcal{S}_1 \mathcal{J}_2 \mathcal{R}_3$
		&
		$\mathcal{S}_1 \mathcal{J}_2 \mathcal{R}_3$
		&
		(i) $u_\ell \geq u_r,\ \mathtt{f}_1(u_\ell) > \mathtt{f}_2(u_\ell):\, \mathcal{R}_1 \mathcal{J}_2 \mathcal{S}_3$ \newline
		(ii) $u_\ell > u_r,\ \mathtt{f}_1(u_r) > \mathtt{f}_2(u_r),\ \mathtt{f}_1(u_\ell) < \mathtt{f}_2(u_\ell):\, \mathcal{S}_1 \mathcal{J}_2 \mathcal{S}_3$ \newline
		(iii) $u_\ell > u_r,\ \mathtt{f}_1(u_r) < \mathtt{f}_2(u_r):\, \mathcal{S}_1 \mathcal{J}_2 \mathcal{R}_3$ \newline
		(iv) $u_\ell < u_r,\ \mathtt{f}_1(u_r) > \mathtt{f}_2(u_r):\, \mathcal{R}_1 \mathcal{J}_2 \mathcal{S}_3$ \newline
		(v) $u_\ell < u_r,\ \mathtt{f}_1(u_\ell) > \mathtt{f}_2(u_\ell),\ \mathtt{f}_1(u_r) < \mathtt{f}_2(u_r):\, \mathcal{R}_1 \mathcal{J}_2 \mathcal{R}_3$ \newline
		(vi) $u_\ell \leq u_r,\ \mathtt{f}_1(u_\ell) < \mathtt{f}_2(u_\ell):\, \mathcal{S}_1 \mathcal{J}_2 \mathcal{R}_3$
		\\
		\hline
	\end{tabularx}
	\caption{Classification of solutions based on the position of the interaction point}\label{t1}
\end{table}
Hence, the elementary curves associated with three characteristic fields divide the whole region into four constant states (see, Figure \ref{m100}). Further, depending on the position of the interaction point  $ (\rho_{k}, u_{k}) $ and the relations between $ q_{\ell} $ and $ q_{r}, $ $ \mathtt{f}_{1}(u_{\ell}) $ and $ \mathtt{f}_{2}(u_{\ell}), $ $ \mathtt{f}_{1}(u_{r}) $ and $ \mathtt{f}_{2}(u_{r}), $ the Riemann solution's structures are composed of $ \mathcal{S}_{1} \mathcal{J}_{2} \mathcal{S}_{3}, $ $ \mathcal{S}_{1} \mathcal{J}_{2} \mathcal{R}_{3}, $ $ \mathcal{R}_{1} \mathcal{J}_{2} \mathcal{S}_{3}, $ and $ \mathcal{R}_{1} \mathcal{J}_{2} \mathcal{R}_{3} $ (see, Figures \ref{m125}-\ref{m128}). The pictorial representations for the case $ (\rho_{k}, u_{k})\in \mathcal{S}_{q}^{\ell}$, $ (\rho_{k}, u_{k})\in \mathcal{S}_{q}^{r}$ are given; remaining cases can be dealt similarly. 
The classification of Riemann solutions is given in the Table \ref{t1}.

\section{Limiting behaviors of the Riemann solutions for $ u_{\ell}>u_{r}$}
Here, we discuss the behaviors of the Riemann solutions for $ u_{\ell}>u_{r} $ as pressure vanishes, \textit{i.e.,} $ A, B \to 0.  $ It may be noticed that $ 	u=\mathtt{g}_{1}(\rho) \to u_{\ell} $ and $ 	u=\mathtt{g}_{2}(\rho) \to u_{r} $ whenever $ A, B \to 0. $ Thus, for $ u_{\ell}>u_{r},$ there exists  $ m_{o} $ such that $ (\rho_{AB}, u_{AB}) \in  \mathcal{S}_{q}^{\ell}$ and $ (\rho_{AB}, u_{AB}) \in  \mathcal{S}_{q}^{r}$ for any $ A, B \in (0, m_{o}), $ where $ (\rho_{AB}, u_{AB})$ is the point of intersection between $ \mathscr{W}_{q}^{\ell}$ and $ \mathscr{W}_{q}^{r}.$ Thus, we only need to consider the case \bm{$\mathcal{A}$}.
	\begin{theorem}\label{m101} The density coordinate of the point of intersection between the elementary curves $\mathcal{S}_{q}^{\ell}$ and $\mathcal{S}_{q}^{r}$ approaches to infinity as the parameters of the equation of state \eqref{m2} tend to zero, \textit{i.e.,} $\lim_{A, B \to 0} \rho_{AB} = +\infty.$
%		\begin{equation*}\label{m28}
%			\lim_{A, B \to 0} \rho_{AB} = +\infty.
%		\end{equation*}
	\begin{proof}
		In view of  $ (\rho_{AB}, u_{AB}) \in  \mathcal{S}_{q}^{\ell}$ and $ (\rho_{AB}, u_{AB}) \in  \mathcal{S}_{q}^{r},$ we have 
		\begin{equation}\label{m29}
			u_{AB}=u_{\ell}-\sqrt{\mathcal{F}(\rho_{\ell}, \rho_{AB})}, \qquad \rho_{AB}>\rho_{\ell},
		\end{equation}
%		\begin{equation}\label{m29}
%		u_{AB}=u_{\ell}-\sqrt{\left(A(\rho_{AB}^{n}-\rho_{\ell}^{n})+B\left(\frac{1}{\rho_{\ell}^{\alpha}}-\frac{1}{\rho_{AB}^{\alpha}}\right)\right)\left(\frac{1}{\rho_{\ell}}-\frac{1}{\rho_{AB}}\right)}, \qquad \rho_{AB}>\rho_{\ell},
%		\end{equation}
%	and \begin{equation}\label{m30}
%		u_{AB}=u_{r}+\sqrt{\left(A(\rho_{AB}^{n}-\rho_{r}^{n})+B\left(\frac{1}{\rho_{r}^{\alpha}}-\frac{1}{\rho_{AB}^{\alpha}}\right)\right)\left(\frac{1}{\rho_{r}}-\frac{1}{\rho_{AB}}\right)}, \qquad \rho_{AB}>\rho_{r}.
%	\end{equation}
 \begin{equation}\label{m30}
	u_{AB}=u_{r}+\sqrt{\mathcal{F}(\rho_{AB}, \rho_{r})}, \qquad \rho_{AB}>\rho_{r}.
\end{equation}where
	$\mathcal{F}(\rho_{a}, \rho_{b})=\left(A(\rho_{b}^{n}-\rho_{a}^{n})+B\left(\frac{1}{\rho_{a}^{\alpha}}-\frac{1}{\rho_{b}^{\alpha}}\right)\right)\left(\frac{1}{\rho_{a}}-\frac{1}{\rho_{b}}\right).$ Equations \eqref{m29} and \eqref{m30} imply that\vspace{-0.2cm}
%\begin{eqnarray}\label{m36}
%	u_{\ell}-u_{r}=\hspace{-0.5cm}&&\sqrt{\left(A(\rho_{AB}^{n}-\rho_{\ell}^{n})+B\left(\frac{1}{\rho_{\ell}^{\alpha}}-\frac{1}{\rho_{AB}^{\alpha}}\right)\right)\left(\frac{1}{\rho_{\ell}}-\frac{1}{\rho_{AB}}\right)}\nonumber\\&&+\sqrt{\left(A(\rho_{AB}^{n}-\rho_{r}^{n})+B\left(\frac{1}{\rho_{r}^{\alpha}}-\frac{1}{\rho_{AB}^{\alpha}}\right)\right)\left(\frac{1}{\rho_{r}}-\frac{1}{\rho_{AB}}\right)}:= \mathcal{K}(\rho_{AB}, A, B).
%\end{eqnarray}
\begin{eqnarray}\label{m36}
	u_{\ell}-u_{r}=\sqrt{\mathcal{F}(\rho_{\ell}, \rho_{AB})}+\sqrt{\mathcal{F}(\rho_{AB}, \rho_{r})}:= \mathcal{K}(\rho_{AB}, A, B).
\end{eqnarray} Choose $ A_{1}, B_{1}, A_{2}, B_{2} \in (0, m_{o})$ such that $ A_{1}>A_{2}$ and $ B_{1}>B_{2}. $ Let us consider $ \rho_{A_{1}B_{1}}> \rho_{A_{2}B_{2}}. $ Then \eqref{m36} yields that $ \mathcal{K}(\rho_{A_{1}B_{1}}, A_{1}, B_{1}) >  \mathcal{K}(\rho_{A_{2}B_{2}}, A_{2}, B_{2}),$ which, in turn,  contradicts with $ \mathcal{K}(\rho_{A_{1}B_{1}}, A_{1}, B_{1}) =  \mathcal{K}(\rho_{A_{2}B_{2}}, A_{2}, B_{2})= u_{\ell}-u_{r}.$ Therefore, $ \rho_{AB} $ is monotonically decreasing \wrt {} $ A, B.$
The monotonicity of $ \rho_{AB}$ implies either $ \lim_{A, B \to 0} \rho_{AB} = +\infty $ or $ 	\lim_{A, B \to 0} \rho_{AB} =\text{constant} \neq +\infty. $ If $ \lim_{A, B \to 0} \rho_{AB} =\text{constant} \neq +\infty, $ then \eqref{m36} infers the contradiction to $ u_{\ell}>u_{r}.$ Hence, $ 	\lim_{A, B \to 0} \rho_{AB} = +\infty.$
	\end{proof}
	\end{theorem}We divide this discussion into the following two cases depending on the relations between $ q_{\ell} $ and $ q_{r},$ \textit{i.e.,} 
	\vspace*{-.4cm}
	\subsection{Case (i)  $\boldsymbol{q_{\ell}>q_{r}}$} In view of  \eqref{m40}, one has
\begin{equation}\label{m41}
	\mathtt{f}_{1}(u_{\ell})=\mathtt{h}_{1}(\mathtt{g}_{1}^{-1}(u_{\ell}))=\mathcal{P}(\mathtt{g}_{1}^{-1}(u_{\ell}))+q_{\ell}=\mathcal{P}(\rho_{\ell})+q_{\ell}\to q_{\ell} \quad \text{as} \quad A, B \to 0.
\end{equation}
and \begin{equation}\label{m42}
		 \mathtt{f}_{2}(u_{\ell})=\mathtt{h}_{2}(\mathtt{g}_{2}^{-1}(u_{\ell}))=\mathcal{P}(\mathtt{g}_{2}^{-1}(u_{\ell}))+q_{r} =\mathcal{P}(\rho_{d})+q_{r}\to \rho_{r}(u_{\ell}-u_{r})^2 +q_{r} \quad \text{as} \quad A, B \to 0,
	\end{equation}where $\rho_{d}$ satisfies
	\begin{equation}\label{m43}
		u_{\ell}=u_{r}+\sqrt{\mathcal{F}(\rho_{d}, \rho_{r})}, \qquad \rho_{d}>\rho_{r}.
	\end{equation}
%\begin{equation}\label{m43}
%	u_{\ell}=u_{r}+\sqrt{\left(A(\rho_{d}^{n}-\rho_{r}^{n})+B\left(\frac{1}{\rho_{r}^{\alpha}}-\frac{1}{\rho_{d}^{\alpha}}\right)\right)\left(\frac{1}{\rho_{r}}-\frac{1}{\rho_{d}}\right)}, \qquad \rho_{d}>\rho_{r}.
%\end{equation}
This discussion will again be divided into the following three sub-cases:
\\\textbf{(a).} $ \boldsymbol{ q_{\ell} < \rho_{r}(u_{\ell}-u_{r})^2 +q_{r}}:$ In this case, there exists $ m_{1}\in (0, m_{o}) $ such that $ \mathtt{f}_{1}(u_{\ell})< \mathtt{f}_{2}(u_{\ell})$ for $ A, B \in (0, m_{1}). $ Now, for fixed $ A, B \in (0, m_{1}), $ the Riemann solution is composed by
% $ \mathcal{S}_{1} \mathcal{J}_{2} \mathcal{S}_{3},$ 
 back shock wave, contact discontinuity and front shock wave, \textit{i.e.,}\vspace{-0.15cm}
\begin{equation}\label{m44}
	(\rho^{AB}, q^{AB},  u^{AB})(x, t) =	\begin{cases}
		(\rho_{\ell}, q_{\ell}, u_{\ell}), \quad \qquad\qquad -\infty<x/t<\sigma_{1}^{AB},
		\\ (\rho_{*_{1AB}}, q_{*_{1AB}}, u_{*_{1AB}}), \quad \sigma_{1}^{AB}<x/t<\sigma_{2}^{AB},
		\\ (\rho_{*_{2AB}}, q_{*_{2AB}}, u_{*_{2AB}}), \quad \sigma_{2}^{AB}<x/t<\sigma_{3}^{AB},
		\\ (\rho_{r}, q_{r}, u_{r}), \qquad \qquad \quad \,\, \sigma_{3}^{AB}<x/t<+\infty.
	\end{cases}
\end{equation}
Here, $ (\rho_{\ell}, q_{\ell}, u_{\ell}) $ and $ (\rho_{*_{1AB}}, q_{*_{1AB}}, u_{*_{1AB}}) $ are connected with the following back shock wave ($\mathcal{S}_{1}$):\vspace{-0.15cm} 
\begin{equation}\label{m45}
	\mathcal{S}_{1}=\begin{cases}
		\sigma_{1}^{AB}=u_{*_{1AB}}-\sqrt{\frac{\rho_{\ell}(\mathcal{P}(\rho_{*_{1AB}})-\mathcal{P}(\rho_{\ell}))}{\rho_{*_{1AB}}(\rho_{*_{1AB}}-\rho_{\ell})}}=u_{\ell}-\sqrt{\frac{\rho_{*_{1AB}}(\mathcal{P}(\rho_{*_{1AB}})-\mathcal{P}(\rho_{\ell}))}{\rho_{\ell}(\rho_{*_{1AB}}-\rho_{\ell})}},\\u_{*_{1AB}}=u_{\ell}-\sqrt{(\mathcal{P}(\rho_{*_{1AB}})-\mathcal{P}(\rho_{\ell}))\left(\frac{1}{\rho_{\ell}}-\frac{1}{\rho_{*_{1AB}}}\right)}, \\q_{\ell}=q_{*_{1AB}},\quad \rho_{\ell}<\rho_{*_{1AB}}<\rho_{AB},
	\end{cases}
\end{equation}$ (\rho_{*_{1AB}}, q_{*_{1AB}}, u_{*_{1AB}}) $ and $ (\rho_{*_{2AB}}, q_{*_{2AB}}, u_{*_{2AB}}) $ are connected with contact discontinuity ($\mathcal{J}_{2}$): \vspace{-0.1cm}
\begin{equation}\label{m46}
	\mathcal{J}_{2}=\begin{cases}
		\sigma_{2}^{AB}=u_{*_{1AB}}=u_{*_{2AB}}\\q_{*_{1AB}}+\mathcal{P}(\rho_{*_{1AB}})=q_{*_{2AB}}+\mathcal{P}(\rho_{*_{2AB}}),
	\end{cases}
\end{equation} and $ (\rho_{*_{2AB}}, q_{*_{2AB}}, u_{*_{2AB}}) $ and $ (\rho_{r}, q_{r}, u_{r}) $ are connected with front shock wave ($\mathcal{S}_{3}$):
\begin{equation}\label{m47}
	\mathcal{S}_{3}=\begin{cases}
		\sigma_{3}^{AB}=u_{r}+\sqrt{\frac{\rho_{*_{2AB}}(\mathcal{P}(\rho_{r})-\mathcal{P}(\rho_{*_{2AB}}))}{\rho_{r}(\rho_{r}-\rho_{*_{2AB}})}}=u_{*_{2AB}}+\sqrt{\frac{\rho_{r}(\mathcal{P}(\rho_{r})-\mathcal{P}(\rho_{*_{2AB}}))}{\rho_{*_{2AB}}(\rho_{r}-\rho_{*_{2AB}})}},\\u_{*_{2AB}}=u_{r}+\sqrt{(\mathcal{P}(\rho_{*_{2AB}})-\mathcal{P}(\rho_{r}))\left(\frac{1}{\rho_{r}}-\frac{1}{\rho_{*_{2AB}}}\right)}, \\q_{*_{2AB}}=q_{r},\quad \rho_{*_{2AB}}>\rho_{AB}>\rho_{r}.
	\end{cases}
\end{equation}  

\begin{theorem}\label{m103} The intermediate density between the contact discontinuity and the front shock wave blows up as the parameters in the equation of state \eqref{m2} get closer to zero, \textit{i.e.,} $\lim_{A, B \to 0} \rho_{*_{2AB}} = +\infty.$
%\begin{equation*}\label{m48} 
%	\lim_{A, B \to 0} \rho_{*_{2AB}} = +\infty.
%\end{equation*}
\begin{proof}
In view of theorem \ref{m101} and \eqref{m47},  we get the required result.
\end{proof}
\end{theorem}

\begin{theorem}\label{m104}
	The intermediate density between the back shock wave and the contact discontinuity tends to an extreme concentration as pressure drops to zero, \textit{i.e.,} $\lim_{A, B \to 0} \rho_{*_{1AB}} = +\infty.$
%	\begin{equation*}\label{m49}
%		\lim_{A, B \to 0} \rho_{*_{1AB}} = +\infty.
%	\end{equation*}
\begin{proof}
	From $ \eqref{m45}_{2}, $ $ \eqref{m46}_{2}, $ and $ \eqref{m47}_{2}, $ we have
		\begin{eqnarray}\label{m50}
		u_{\ell}-u_{r}=\sqrt{\mathcal{F}(\rho_{\ell}, \rho_{*_{1AB}})}+\sqrt{\left\{\mathcal{P}( \rho_{*_{1AB}})-\mathcal{P}(\rho_{r})+q_{\ell}-q_{r}\right\}\left(\frac{1}{\rho_{r}}-\frac{1}{\rho_{*_{2AB}}}\right)}.
	\end{eqnarray}
%	\begin{eqnarray}\label{m50}
%		u_{\ell}-u_{r}=\hspace{-0.5cm}&&\sqrt{\left(A(\rho_{*_{1AB}}^{n}-\rho_{\ell}^{n})+B\left(\frac{1}{\rho_{\ell}^{\alpha}}-\frac{1}{\rho_{*_{1AB}}^{\alpha}}\right)\right)\left(\frac{1}{\rho_{\ell}}-\frac{1}{\rho_{*_{1AB}}}\right)}\nonumber\\&&+\sqrt{\left(A(\rho_{*_{1AB}}^{n}-\rho_{r}^{n})+B\left(\frac{1}{\rho_{r}^{\alpha}}-\frac{1}{\rho_{*_{1AB}}^{\alpha}}\right)+q_{\ell}-q_{r}\right)\left(\frac{1}{\rho_{r}}-\frac{1}{\rho_{*_{2AB}}}\right)}.
%	\end{eqnarray}
 On employing $ \lim_{A, B \to 0} $ in $ \eqref{m50}, $ under  the consideration $ 	\lim_{A, B \to 0} \rho_{*_{1AB}}=L \in (\rho_{\ell}, +\infty),$ one  gets $ q_{\ell}=\rho_{r}(u_{\ell}-u_{r})^2+q_{r}, $ which contradicts  $ q_{\ell}<\rho_{r}(u_{\ell}-u_{r})^2+q_{r}. $ 
\end{proof}
\end{theorem}

\begin{theorem}\label{m105} As the parameters in the equation of state \eqref{m2} approach $0,$ the intermediate velocities between the back shock wave and contact discontinuity, as well as between the contact discontinuity and front shock wave, remain finite. Additionally, one has the following
%	\begin{equation}\label{m51}
%	\lim_{A, B \to 0} u_{*_{1AB}} = \lim_{A, B \to 0} u_{*_{2AB}} = \sigma,
%	\end{equation}
\begin{equation}\label{m52}
\lim_{A, B \to 0} u_{*_{1AB}} = \lim_{A, B \to 0} u_{*_{2AB}}=	\sigma=\begin{cases}
		\frac{u_{\ell}+u_{r}}{2}+\frac{q_{\ell}-q_{r}}{2\rho_{r}(u_{\ell}-u_{r})}, \qquad \,\,\,\qquad \qquad \qquad \,\,  \rho_{\ell}=\rho_{r},\\ \frac{(\rho_{\ell}u_{\ell}-\rho_{r}u_{r})-\sqrt{\rho_{\ell}\rho_{r}(u_{\ell}-u_{r})^2-(\rho_{\ell}-\rho_{r})(q_{\ell}-q_{r})}}{\rho_{\ell}-\rho_{r}}, \,\,  \rho_{\ell}\neq\rho_{r}.
	\end{cases}
\end{equation}
\begin{proof}
If we assume that $ \lim_{A, B \to 0} A\rho_{*_{1AB}}^n=M, $ then $ \eqref{m50} $ gives
\begin{equation}\label{m53}
	u_{\ell}-\sqrt{\frac{M}{\rho_{\ell}}}=u_{r}+\sqrt{\frac{M+q_{\ell}-q_{r}}{\rho_{r}}}:=\sigma,
\end{equation}
which, in turn, implies that
\begin{equation}\label{m54}
	M=\rho_{\ell}(u_{\ell}-\sigma)^2=\rho_{r}(\sigma-u_{r})^2-(q_{\ell}-q_{r}).
\end{equation}On simplifying \eqref{m54} and  taking $ \lim_{A, B \to 0}$ in $ \eqref{m45}_{2} $ and $ \eqref{m47}_{2}, $ we get the required result.
\end{proof}
\end{theorem}
\begin{theorem}\label{m106} The vanishing limits of the parameters in the equation of state \eqref{m2} cause the back shock wave, contact discontinuity, and front shock wave curves to coincide, \textit{i.e.,} $\lim_{A, B \to 0}\sigma_{1}^{AB}=\lim_{A, B \to 0}\sigma_{2}^{AB}=\lim_{A, B \to 0}\sigma_{3}^{AB}=\sigma.$
%	\begin{equation*}\label{m55}
%	\lim_{A, B \to 0}\sigma_{1}^{AB}=\lim_{A, B \to 0}\sigma_{2}^{AB}=\lim_{A, B \to 0}\sigma_{3}^{AB}=\sigma.
%	\end{equation*}
\begin{proof}
	The result can be obtained by employing  $ \lim_{A, B \to 0} $ in $ \eqref{m45}_{1}, $ $ \eqref{m46}_{1}, $ and $ \eqref{m47}_{1}. $
\end{proof}
\end{theorem}
%\begin{remark}
%	Theorems \ref{m103}-\ref{m106} imply that the intermediate densities $ \rho_{*_{1AB}} $ and $ \rho_{*_{2AB}} $ blow up as pressure drops to zero; indeed, $ 1 $-shock wave ($\mathcal{S}_{1}$), contact discontinuity ($\mathcal{J}_{2}$), and $ 3 $-shock wave ($\mathcal{S}_{3}$) coincide with each other as $ A, B \to 0. $\vspace*{-0.1cm}	
%\end{remark}

\begin{theorem}\label{m107}
	\begin{equation*}\label{m56}
	\lim_{A, B \to 0}	\int_{\sigma_{1}^{AB}}^{\sigma_{3}^{AB}} \rho^{AB} d \xi=-\sigma[\rho]+[\rho u],
	\end{equation*}
\begin{equation*}\label{m57}
	\lim_{A, B \to 0}	\int_{\sigma_{1}^{AB}}^{\sigma_{3}^{AB}} \rho^{AB} q^{AB} d \xi=-\sigma[\rho q]+[\rho q u],
\end{equation*} and
\begin{equation*}\label{m58}
	\lim_{A, B \to 0}	\int_{\sigma_{1}^{AB}}^{\sigma_{3}^{AB}} \rho^{AB} u^{AB} d \xi=(-\sigma[\rho]+[\rho u])\sigma,
\end{equation*} where $ [a]=a_{\ell}-a_{r}. $
\begin{proof}
	We have \small
	\begin{eqnarray}
		\lim_{A, B \to 0}	\int_{\sigma_{1}^{AB}}^{\sigma_{3}^{AB}} \rho^{AB} d \xi=\hspace{-0.7cm}&& \lim_{A, B \to 0} \rho_{*_{1AB}}(\sigma_{2}^{AB}-\sigma_{1}^{AB})+\lim_{A, B \to 0} \rho_{*_{2AB}}(\sigma_{3}^{AB}-\sigma_{2}^{AB}), \nonumber \\&&= \lim_{A, B \to 0} \rho_{*_{1AB}}\sqrt{\frac{\rho_{\ell}(\mathcal{P}(\rho_{*_{1AB}})-\mathcal{P}(\rho_{\ell}))}{\rho_{*_{1AB}}(\rho_{*_{1AB}}-\rho_{\ell})}}+\lim_{A, B \to 0} \rho_{*_{2AB}}\sqrt{\frac{\rho_{r}(\mathcal{P}(\rho_{r})-\mathcal{P}(\rho_{*_{2AB}}))}{\rho_{*_{2AB}}(\rho_{r}-\rho_{*_{2AB}})}}, \nonumber\\ && = \sqrt{\rho_{\ell} M} + \sqrt{\rho_{r} (M+q_{\ell}-q_{r})} =\rho_{\ell}(u_{\ell}-\sigma)+\rho_{r}(\sigma-u_{r})=-\sigma[\rho]+[\rho u], \nonumber
	\end{eqnarray}
\begin{equation*}\label{m59}
	\lim_{A, B \to 0}	\int_{\sigma_{1}^{AB}}^{\sigma_{3}^{AB}} \rho^{AB} q^{AB} d \xi=\lim_{A, B \to 0}\rho_{*_{1AB}}q_{\ell}(\sigma_{2}^{AB}-\sigma_{1}^{AB})+\lim_{A, B \to 0} \rho_{*_{2AB}}q_{r}(\sigma_{3}^{AB}-\sigma_{2}^{AB})=-\sigma[\rho q]+[\rho q u],
\end{equation*}
and \begin{equation*}\label{m60}
	\lim_{A, B \to 0}	\int_{\sigma_{1}^{AB}}^{\sigma_{3}^{AB}} \rho^{AB} u^{AB} d \xi=\lim_{A, B \to 0}\rho_{*_{1AB}}u_{*_{1AB}}(\sigma_{2}^{AB}-\sigma_{1}^{AB})+\lim_{A, B \to 0} \rho_{*_{2AB}}u_{*_{2AB}}(\sigma_{3}^{AB}-\sigma_{2}^{AB})=(-\sigma[\rho]+[\rho u])\sigma.
\end{equation*} 
\end{proof}
\end{theorem}

\begin{note}
	\normalsize
	Denote \begin{equation*}
		H_{a}(x-\sigma t)=\begin{cases}
			a_{\ell}, \qquad x<\sigma t,\\
			a_{r}, \qquad x>\sigma t.
		\end{cases}
	\end{equation*}
\end{note} Now, we have the following theorem characterizing the vanishing-parameters limits of $ \rho^{AB}, $ $ \rho^{AB}q^{AB}, $ and $ \rho^{AB}u^{AB} .$
\begin{theorem}\label{123}
	Assume $ u_{\ell}>u_{r}, $ $ q_{\ell}>q_{r}, $ $ q_{\ell} < \rho_{r}(u_{\ell}-u_{r})^2 +q_{r},$ and for all fixed $A, B\in(0, m_{1}),$  $ (\rho^{AB}, q^{AB},  u^{AB}) $ is the $ \mathcal{S}_{1} \mathcal{J}_{2} \mathcal{S}_{3} $ solution of the Riemann problem \eqref{m1}-\eqref{m2} as constructed in section $2.$ Then, for $ A, B \to 0, $ $ \rho^{AB}, $ $ \rho^{AB}q^{AB}, $ and $ \rho^{AB}u^{AB} $ converge in the distributional sense and their limit functions can be expressed as
	\begin{equation*}\label{m61}
		H_{\rho}(x-\sigma t)+\left(-\sigma[\rho]+[\rho u]\right)t \delta(x-\sigma t),
	\end{equation*}
	\begin{equation*}\label{m62}
		H_{\rho q}(x-\sigma t)+\left(-\sigma[\rho q]+[\rho q u]\right)t \delta(x-\sigma t),
	\end{equation*}and
	\begin{equation*}\label{m63}
		H_{\rho u}(x-\sigma t)+\left(-\sigma[\rho]+[\rho u]\right)\sigma t \delta(x-\sigma t),
	\end{equation*}
	respectively.
	\begin{proof}
	 The proof can be done similar to the proof of theorem $3.7$ in reference \cite{cheng2018}. So, we omit the proof. 
	\end{proof} 
\end{theorem}
\begin{remark}
	Theorems \ref{m103}-\ref{123} yield that the state variable $\rho$ possesses a  singularity as a weighted Dirac delta function whenever the parameters in the equation of state drop to zero. Indeed, in the vanishing limits of the parameters of equation of state, back shock, contact discontinuity and front shock waves coincide  to form a new type of nonlinear hyperbolic wave, namely, delta shock wave on which the state variable $\rho$ contains the Dirac delta function.
	% Thus, the delta shock wave with the Dirac delta function in the state variable $\rho$ will appear in the solution. Hence, in the vanishing-parameters limiting case, the Riemann solution composed by $ 1 $-shock wave, contact discontinuity, and $ 3 $-shock wave converges to a single delta shock solution which is same as the
	%corresponding Riemann solution of the pressureless Cargo-LeRoux model.
\end{remark}
\noindent 
\\\textbf{(b).} $ \boldsymbol{ q_{\ell} > \rho_{r}(u_{\ell}-u_{r})^2 +q_{r}}:$ In this case, there exists $ m_{2}\in (0, m_{o}) $ such that $ \mathtt{f}_{1}(u_{\ell})> \mathtt{f}_{2}(u_{\ell})$ for $ A, B \in (0, m_{2}). $ Now, for fixed $ A, B \in (0, m_{2}), $ the Riemann solution is composed by
 %$ \mathcal{R}_{1} \mathcal{J}_{2} \mathcal{S}_{3},$ 
 back rarefaction wave, contact discontinuity and front shock wave, \textit{i.e.,}
\begin{equation}\label{m83}
	(\rho^{AB}, q^{AB},  u^{AB})(x, t) =	\begin{cases}
		(\rho_{\ell}, q_{\ell}, u_{\ell}), \quad \qquad\qquad -\infty<x/t<u_{\ell}-\sqrt{\mathcal{P}^\prime(\rho_{\ell})},\\
	\qquad	\mathcal{R}_{1}, \qquad\qquad\qquad\,\, u_{\ell}-\sqrt{\mathcal{P}^\prime(\rho_{\ell})}<x/t<u_{*_{1AB}}-\sqrt{\mathcal{P}^\prime(\rho_{*_{1AB}})},
		\\ (\rho_{*_{1AB}}, q_{*_{1AB}}, u_{*_{1AB}}), \quad u_{*_{1AB}}-\sqrt{\mathcal{P}^\prime(\rho_{*_{1AB}})}<x/t<\sigma_{2}^{AB},
		\\ (\rho_{*_{2AB}}, q_{*_{2AB}}, u_{*_{2AB}}), \quad \sigma_{2}^{AB}<x/t<\sigma_{3}^{AB},
		\\ (\rho_{r}, q_{r}, u_{r}), \qquad \qquad \quad \,\, \sigma_{3}^{AB}<x/t<+\infty.
	\end{cases}
\end{equation}
Here, $ (\rho_{\ell}, q_{\ell}, u_{\ell}) $ and $ (\rho_{*_{1AB}}, q_{*_{1AB}}, u_{*_{1AB}}) $ are connected with back rarefaction wave ($\mathcal{R}_{1}$):
\begin{equation}\label{m84}
	\mathcal{R}_{1}=\begin{cases}
		x/t=u-\sqrt{\mathcal{P}^\prime(\rho)},\\q_{\ell}=q_{ab},\\u_{ab}=u_{\ell}-\int_{\rho_{\ell}}^{\rho_{ab}}\frac{\sqrt{\mathcal{P}^\prime(s)}}{s}ds, \quad \rho_{*_{1AB}}\leq\rho_{ab}\leq\rho_{\ell}<\rho_{AB},
	\end{cases}
\end{equation}
$ (\rho_{*_{1AB}}, q_{*_{1AB}}, u_{*_{1AB}}) $ and $ (\rho_{*_{2AB}}, q_{*_{2AB}}, u_{*_{2AB}}) $ are connected with contact discontinuity ($\mathcal{J}_{2}$)
%\begin{equation}\label{m85}
%	\mathcal{J}_{2}=\begin{cases}
%		\sigma_{2}^{AB}=u_{*_{1AB}}=u_{*_{2AB}},\\q_{*_{1AB}}+\mathcal{P}(\rho_{*_{1AB}})=q_{*_{2AB}}+\mathcal{P}(\rho_{*_{2AB}}),
%	\end{cases}
%\end{equation}
and $ (\rho_{*_{2AB}}, q_{*_{2AB}}, u_{*_{2AB}}) $ and $ (\rho_{r}, q_{r}, u_{r}) $ are connected with front shock wave ($\mathcal{S}_{3}$). The contact discontinuity ($\mathcal{J}_{2}$) and front shock wave ($\mathcal{S}_{3}$) are represented by the above expressions \eqref{m46} and \eqref{m47}, respectively. Similar to the proof done in theorem \ref{m104}, one can achieve $\lim_{A, B \to 0} \rho_{*_{2AB}} = +\infty.$ 
%\begin{equation}\label{m86}
%	\mathcal{S}_{3}=\begin{cases}
%		\sigma_{3}^{AB}=u_{r}+\sqrt{\frac{\rho_{*_{2AB}}(\mathcal{P}(\rho_{r})-\mathcal{P}(\rho_{*_{2AB}}))}{\rho_{r}(\rho_{r}-\rho_{*_{2AB}})}}=u_{*_{2AB}}+\sqrt{\frac{\rho_{r}(\mathcal{P}(\rho_{r})-\mathcal{P}(\rho_{*_{2AB}}))}{\rho_{*_{2AB}}(\rho_{r}-\rho_{*_{2AB}})}},\\u_{*_{2AB}}=u_{r}+\sqrt{(\mathcal{P}(\rho_{*_{2AB}})-\mathcal{P}(\rho_{r}))\left(\frac{1}{\rho_{r}}-\frac{1}{\rho_{*_{2AB}}}\right)}, \\q_{*_{2AB}}=q_{r},\quad \rho_{*_{2AB}}>\rho_{AB}>\rho_{r}.
%	\end{cases}
%\end{equation}  

%\begin{theorem}\label{m87}
%	\begin{equation*}\label{m88}
%		\lim_{A, B \to 0} \rho_{*_{2AB}} = +\infty.
%	\end{equation*}
%	\begin{proof} The proof can be done by
%	using theorem \ref{m101} and the relation $ \rho_{*_{2AB}}>\rho_{AB}.$ 
%	\end{proof}
%\end{theorem}

\begin{theorem}\label{m89} The intermediate state between the back rarefaction wave and contact discontinuity turns out to be a vacuum state as the parameters in equation of state \eqref{m2} converge to zero, \textit{i.e.,} 	$\lim_{A, B \to 0} \rho_{*_{1AB}} = 0.$
%	\begin{equation*}\label{m90}
%		\lim_{A, B \to 0} \rho_{*_{1AB}} = 0.
%	\end{equation*}
	\begin{proof}
		From $ \eqref{m46}_{2}, $ $ \eqref{m47}_{3}, $ and $ \eqref{m84}_{2}, $ we have
%		\begin{eqnarray}\label{m91}
%			u_{\ell}-u_{r}=\hspace{-0.5cm}&&\sqrt{\left(A(\rho_{*_{1AB}}^{n}-\rho_{r}^{n})+B\left(\frac{1}{\rho_{r}^{\alpha}}-\frac{1}{\rho_{*_{1AB}}^{\alpha}}\right)+q_{\ell}-q_{r}\right)\left(\frac{1}{\rho_{r}}-\frac{1}{\rho_{*_{2AB}}}\right)}\nonumber\\&& \hspace{8.5cm}+\int_{\rho_{\ell}}^{\rho_{*_{1AB}}}\frac{\sqrt{\mathcal{P}^\prime(s)}}{s}ds.
%		\end{eqnarray} 
\begin{eqnarray}\label{m91}
	u_{\ell}-u_{r}=\sqrt{\left\{\mathcal{P}( \rho_{*_{1AB}})-\mathcal{P}(\rho_{r})+q_{\ell}-q_{r}\right\}\left(\frac{1}{\rho_{r}}-\frac{1}{\rho_{*_{2AB}}}\right)}+\int_{\rho_{\ell}}^{\rho_{*_{1AB}}}\frac{\sqrt{\mathcal{P}^\prime(s)}}{s}ds.
\end{eqnarray} Taking $ \lim_{A, B \to 0} $ in $ \eqref{m91}, $ with the assumption  $ 	\lim_{A, B \to 0} \rho_{*_{1AB}}=L (\neq 0) \in (0, \rho_{\ell}), $ one gets $ q_{\ell}=\rho_{r}(u_{\ell}-u_{r})^2+q_{r},$ which contradicts with $ q_{\ell}>\rho_{r}(u_{\ell}-u_{r})^2+q_{r}. $ Thus, $ \lim_{A, B \to 0} \rho_{*_{1AB}} = 0.$
	\end{proof}
\end{theorem}
\begin{theorem}\label{m90} Set $\lim_{A, B \to 0}u_{*_{1AB}}=\nu.$ Then
	\begin{equation*}\label{m92}
\lim_{A, B \to 0} u_{*_{1AB}}=\lim_{A, B \to 0} u_{*_{2AB}}=\lim_{A, B \to 0}\sigma_{2}^{AB}=\lim_{A, B \to 0}\sigma_{3}^{AB}=\nu \in \left[u_{r},  u_{r}+\sqrt{\frac{q_{\ell}-q_{r}}{\rho_{r}}}\,\,\right].		
	\end{equation*}
\begin{proof}Now,
	$\rho_{*_{1AB}}<\rho_{\ell}$ and $ q_{*_{1AB}}+\mathcal{P}(\rho_{*_{1AB}})=q_{*_{2AB}}+\mathcal{P}(\rho_{*_{2AB}}) $ imply that
	\begin{equation}\label{m93}
		A\rho_{r}^{n}-\frac{B}{\rho_{r}^{\alpha}}+q_{r}-q_{\ell}<A\rho_{*_{1AB}}^n-\frac{B}{\rho_{*_{1AB}}^{\alpha}}<A\rho_{\ell}^{n}-\frac{B}{\rho_{\ell}^{\alpha}}.
	\end{equation}Taking $\lim_{A, B \to 0},$ we have
\begin{equation}\label{m94}
	q_{r}-q_{\ell} \leq\lim_{A, B \to 0}\mathcal{P}(\rho_{*_{1AB}})\leq0.
\end{equation} Using \eqref{m94} in $ \eqref{m46}_{2}, $ one can obtain \begin{equation}\label{m95}
0\leq\lim_{A, B \to 0}\mathcal{P}(\rho_{*_{2AB}})\leq q_{\ell}-q_{r}.
\end{equation}In view of  \eqref{m46}, \eqref{m47}, and \eqref{m95}, we have the desired result.
\end{proof}
\end{theorem}
\begin{theorem}\label{m90b}
	\begin{equation*}\label{m96}
		\lim_{A, B \to 0}	\int_{\sigma_{2}^{AB}}^{\sigma_{3}^{AB}} \rho^{AB} d \xi=\rho_{r}(\nu-u_{r}),
	\end{equation*}
	\begin{equation*}\label{m97}
		\lim_{A, B \to 0}	\int_{\sigma_{2}^{AB}}^{\sigma_{3}^{AB}} \rho^{AB} q^{AB} d \xi=\rho_{r}q_{r}(\nu-u_{r}),
	\end{equation*} and
	\begin{equation*}\label{m98}
		\lim_{A, B \to 0}	\int_{\sigma_{2}^{AB}}^{\sigma_{3}^{AB}} \rho^{AB} u^{AB} d \xi=\rho_{r}u_{r}(\nu-u_{r})+q_{\ell}-q_{r}+\mathcal{G},
	\end{equation*} where $ \mathcal{G}=\lim_{A, B \to 0}\mathcal{P}(\rho_{*_{1AB}}). $
\begin{proof}
	From the Rankine Hugoniot relation $ \eqref{m14}_{1} $ for $\mathcal{J}_{2}$ and $\mathcal{S}_{3},$ we have 
	\begin{equation}\label{99}
	\begin{cases}
			-\sigma_{3}^{AB}(\rho_{*_{2AB}}-\rho_{r})+(\rho_{*_{2AB}} u_{*_{2AB}}-\rho_{r}u_{r})=0,\\
			-\sigma_{2}^{AB}(\rho_{*_{1AB}}-\rho_{*_{2AB}})+(\rho_{*_{1AB}} u_{*_{1AB}}-\rho_{*_{2AB}} u_{*_{2AB}})=0,
	\end{cases}
	\end{equation} which, in turn, imply that
	\begin{equation*}\label{m110}
	\lim_{A, B \to 0}(\sigma_{3}^{AB}-\sigma_{2}^{AB})\rho_{*_{2AB}}=\lim_{A, B \to 0}\rho_{r}(\sigma_{3}^{AB}-u_{r})=\rho_{r}(\nu-u_{r}).
\end{equation*}Hence	\begin{equation*}\label{m111}
\lim_{A, B \to 0}	\int_{\sigma_{2}^{AB}}^{\sigma_{3}^{AB}} \rho^{AB} d \xi=\lim_{A, B \to 0}(\sigma_{3}^{AB}-\sigma_{2}^{AB})\rho_{*_{2AB}}=\rho_{r}(\nu-u_{r}).
\end{equation*} Similarly, from the Rankine Hugoniot relation $ \eqref{m14}_{2} $ for $\mathcal{J}_{2}$ and $\mathcal{S}_{3},$ we have
	\begin{equation*}\label{m112}
	\lim_{A, B \to 0}	\int_{\sigma_{2}^{AB}}^{\sigma_{3}^{AB}} \rho^{AB} q^{AB} d \xi=\lim_{A, B \to 0}(\sigma_{3}^{AB}-\sigma_{2}^{AB})\rho_{*_{2AB}}q_{*_{2AB}}=\lim_{A, B \to 0}\rho_{r}q_{r}(\sigma_{3}^{AB}-u_{r})=\rho_{r}q_{r}(\nu-u_{r}),
\end{equation*}
and from the Rankine Hugoniot relation $ \eqref{m14}_{3} $ for $\mathcal{J}_{2}$ and $\mathcal{S}_{3},$ one has
	\begin{eqnarray*}\label{m113}
	\lim_{A, B \to 0}	\int_{\sigma_{2}^{AB}}^{\sigma_{3}^{AB}} \rho^{AB} u^{AB} d \xi\hspace{-0.5cm}&&=\lim_{A, B \to 0}(\sigma_{3}^{AB}-\sigma_{2}^{AB})\rho_{*_{2AB}}u_{*_{2AB}},\\&&=\lim_{A, B \to 0}\rho_{r}u_{r}(\sigma_{3}^{AB}-u_{r})+q_{\ell}-q_{r}+\lim_{A, B \to 0}\mathcal{P}(\rho_{*_{1AB}}),\\&&=\rho_{r}u_{r}(\nu-u_{r})+q_{\ell}-q_{r}+\mathcal{G}.
\end{eqnarray*}
\end{proof}	
\end{theorem}
\begin{note}
	Denote \begin{equation*}
	\overline{H}_{a}(x-\nu t)=\begin{cases}
			a_{\ell},\, \qquad x\leq u_{\ell} t,\\0, \qquad \,\,u_{\ell} t<x <\nu t,\\
			a_{r}, \qquad x>\nu t.
		\end{cases}
	\end{equation*}
\end{note}
\begin{theorem}\label{m90a}
	Assume $ u_{\ell}>u_{r}, $ $ q_{\ell}>q_{r}, $ $ q_{\ell} > \rho_{r}(u_{\ell}-u_{r})^2 +q_{r}, $ and for all fixed $ A, B \in (0, m_{2}), $  $ (\rho^{AB}, q^{AB},  u^{AB}) $ is the $ \mathcal{R}_{1} \mathcal{J}_{2} \mathcal{S}_{3} $ solution of the Riemann problem \eqref{m1}-\eqref{m2} as constructed in section $2.$ Then, for $ A, B \to 0, $ $ \rho^{AB}, $ $ \rho^{AB}q^{AB}, $ and $ \rho^{AB}u^{AB} $ converge in the distributional sense and their limit functions can be expressed as
	\begin{equation*}\label{m114}
		\overline{H}_{\rho}(x-\nu t)+\rho_{r}(\nu-u_{r})t \delta(x-\nu t),
	\end{equation*}
	\begin{equation*}\label{m115}
		\overline{H}_{\rho q}(x-\nu t)+\rho_{r}q_{r}(\nu-u_{r})t \delta(x-\nu t),
	\end{equation*}and
	\begin{equation*}\label{m116}
		\overline{H}_{\rho u}(x-\nu t)+(\rho_{r}u_{r}(\nu-u_{r})+q_{\ell}-q_{r}+\mathcal{G})t \delta(x-\nu t),
	\end{equation*}
	respectively.
	\begin{proof}
	The proof of this theorem is similar to that of theorem $3.13$ in reference \cite{cheng2018}. So, we omit the proof.
	\end{proof}
\end{theorem}
\begin{remark}
	Theorem \ref{m89} implies that the non-vacuum intermediate state between the back rarefaction wave and the contact discontinuity tends to a vacuum state as the pressure vanishes. Further, theorems \ref{m90}-\ref{m90a} infer that the intermediate density between the contact discontinuity and the front shock wave converges to a weighted delta measure which forms the delta shock wave. Thus, the Riemann solution composed by back rarefaction wave, contact discontinuity and front shock wave converges to a solution having vacuum on the left hand side and delta shock wave on the right hand side as pressure drops to zero. 
\end{remark}

\noindent\textbf{(c).} $ \boldsymbol{ q_{\ell} = \rho_{r}(u_{\ell}-u_{r})^2 +q_{r}}:$ In order to deal with this case, we perturbed our initial data \eqref{m3} as follows
 \begin{equation} \label{m123}
 	(\rho, q,  u)_{t=0} =	\begin{cases}
 		(\rho_{\ell}, q_{\ell}^{\epsilon}=q_{\ell}+\epsilon, u_{\ell}), \quad x<0,
 		\\ (\rho_{r}, q_{r}, u_{r}),\,\,\,\, \qquad \qquad x>0,
 	\end{cases}
 \end{equation} where $\epsilon>0.$ Similar to the case (b), one can obtain the limiting behavior of the Riemann solution to  \eqref{m1}, \eqref{m2}, and \eqref{m123} as pressure drops to zero. 
 
% Also, $ \rho^{AB, \epsilon}, $ $ \rho^{AB, \epsilon}q^{AB, \epsilon}, $ and $ \rho^{AB, \epsilon}u^{AB, \epsilon} $ converge in the distributional sense and their limit functions can be described as
% \begin{equation*}\label{m124}
% H_{\rho}(x-u_{\ell} t)+\rho_{r}(u_{\ell}-u_{r})t \delta(x-u_{\ell} t),
% \end{equation*}
% \begin{equation*}\label{m129}
% H_{\rho q}(x-u_{\ell} t)+\rho_{r}q_{r}(u_{\ell}-u_{r})t \delta(x-u_{\ell} t),
% \end{equation*}and
% \begin{equation*}\label{m130}
% H_{\rho u}(x-u_{\ell} t)+\rho_{r}u_{r}(u_{\ell}-u_{r})t \delta(x-u_{\ell} t),
% \end{equation*}
% respectively.

 \subsection{Case (ii)  $\boldsymbol{q_{\ell}<q_{r}}$}  From \eqref{m40}, one has
 \begin{equation}\label{m131}
 	\mathtt{f}_{1}(u_{r})=\mathtt{h}_{1}(\mathtt{g}_{1}^{-1}(u_{r}))=\mathcal{P}(\mathtt{g}_{1}^{-1}(u_{r}))+q_{\ell}=\mathcal{P}(\rho_{c})+q_{r}\to \rho_{\ell}(u_{\ell}-u_{r})^2+q_{\ell} \quad \text{as} \quad A, B \to 0,
 \end{equation}
 and \begin{equation}\label{m132}
 	\mathtt{f}_{2}(u_{r})=\mathtt{h}_{2}(\mathtt{g}_{2}^{-1}(u_{r}))=\mathcal{P}(\mathtt{g}_{2}^{-1}(u_{r}))+q_{r} =\mathcal{P}(\rho_{r})+q_{r}\to  q_{r} \quad \text{as} \quad A, B \to 0,
 \end{equation}where $\rho_{c}$ satisfies
 \begin{equation}\label{m133}
 	u_{r}=u_{\ell}-\sqrt{\mathcal{F}(\rho_{\ell}, \rho_{c})}, \qquad \rho_{c}>\rho_{\ell}.
 \end{equation}
 This discussion will again be divided into the following three sub-cases:
 \\\textbf{(a).} $ \boldsymbol{ q_{r} < \rho_{\ell}(u_{\ell}-u_{r})^2 +q_{\ell}}:$
 In this case, there exists $ m_{3}\in (0, m_{o}) $ such that $ \mathtt{f}_{1}(u_{r})> \mathtt{f}_{2}(u_{r})$ for $ A, B \in (0, m_{3}). $ Now, for fixed $ A, B \in (0, m_{3}), $ the Riemann solution is composed by back shock wave, contact discontinuity, and front shock wave. Limiting behavior of the Riemann solution to  
 \eqref{m1}-\eqref{m2} is same as the case (a) for $q_{\ell}>q_{r}.$
  \\\textbf{(b).} $ \boldsymbol{ q_{r}>\rho_{\ell}(u_{\ell}-u_{r})^2 +q_{\ell}}:$ In this case, there exists $ m_{4}\in (0, m_{o}) $ such that $ \mathtt{f}_{1}(u_{r})< \mathtt{f}_{2}(u_{r})$ for $ A, B \in (0, m_{4}). $ Now, for fixed $ A, B \in (0, m_{4}), $ the Riemann solution is composed by back shock wave, contact discontinuity, and front rarefaction wave. Similar to the case (b) for $q_{\ell} > q_{r},$ the intermediate density between the back shock wave and contact discontinuity tends to an extreme concentration and the intermediate state between the contact discontinuity and the front rarefaction wave converges to a vacuum state as pressure drops to zero.
 
\noindent \textbf{(c).} $ \boldsymbol{ q_{r} = \rho_{\ell}(u_{\ell}-u_{r})^2 +q_{\ell}}:$
 In order to deal with this case, we perturbed our initial data \eqref{m3} as follows
 \begin{equation} \label{m134}
 	(\rho, q,  u)_{t=0} =	\begin{cases}
 		(\rho_{\ell}, q_{\ell}, u_{\ell}),\qquad \qquad\qquad x<0,
 		\\ (\rho_{r}, q_{r}^{\epsilon}=q_{r}+\epsilon, u_{r}), \qquad\,  x>0,
 	\end{cases}
 \end{equation} where $\epsilon>0.$ Similar to the case (b), one can get the limiting behavior of the Riemann solution to  \eqref{m1},\eqref{m2}, and \eqref{m134} as pressure drops to zero.

 \begin{remark}
 \textbf{Limiting behaviors of the Riemann solutions for $ u_{\ell}\leq u_{r}.$} We analyze the limiting behaviors of the Riemann solutions for $ u_{\ell}\leq u_{r} $ as pressure vanishes, \textit{i.e.,} $ A, B \to 0.  $ It may be noticed that for $ u_{\ell}< u_{r},$ there exists  $ \bar{m}_{o} $ such that $ (\rho_{AB}, u_{AB}) \in  \mathcal{R}_{q}^{\ell}$ and $ (\rho_{AB}, u_{AB}) \in  \mathcal{R}_{q}^{r}$ for any $ A, B \in (0, \bar{m}_{o}), $ where $ (\rho_{AB}, u_{AB})$ is the point of intersection between $ \mathscr{W}_{q}^{\ell}$ and $ \mathscr{W}_{q}^{r}.$ Thus, we only need to consider the case \bm{$\mathcal{B}$}. Additionally, for $ u_{\ell}= u_{r},$ one only needs to consider the cases \bm{$\mathcal{C}$} and \bm{$\mathcal{D}$}. Depending on the relations between $ q_{\ell} $ and $ q_{r}, $ $ \mathtt{f}_{1}(u_{\ell}) $ and $ \mathtt{f}_{2}(u_{\ell}), $ $ \mathtt{f}_{1}(u_{r}) $ and $ \mathtt{f}_{2}(u_{r}), $ the limiting behaviors of the Riemann solutions to \eqref{m1}-\eqref{m2} can be analyzed in a similar manner as we have established for $ u_{\ell}> u_{r}.$\end{remark}

 \section{Numerical simulations} 
  In section $3, $ we have shown that the Riemann solution composed by two shock waves and a contact discontinuity converges to a single delta shock solution as pressure drops to zero. In order to validate the formation of single delta shock solution, we present numerical computations based on the fifth order weighted essentially non-oscillatory (WENO) scheme. The computational domain is discretized using a uniform spatial step size, $\Delta x=0.005$, while the time step is not prescribed a priori. Instead, it is dynamically adjusted at each time level according to the CFL condition
  \begin{equation}
  	\Delta t=
  	\mathrm{CFL}
  	\frac{\Delta x}{\displaystyle\max_i|\lambda_i|},
  \end{equation}
  where $(\lambda_i)$ denotes the eigenvalues of the system. In all numerical computations, we set the CFL number to $\mathrm{CFL}=0.475.$ For $u_{\ell}>u_{r},$  $q_{\ell}>q_{r},$ and  $q_{\ell}<\rho_{r}(u_{\ell}-u_{r})^2+q_{r},$ let us consider the following initial data

  \begin{figure}[htbp]
  	\centering
  	
  	%------------------------------------------------
  	% Figure 1
  	%------------------------------------------------
  	\begin{minipage}{0.32\textwidth}
  		\centering
  		\begin{tikzpicture}
  			
  			% Main figure
  			\node[anchor=south west] (main1) at (0,0)
  			{\includegraphics[width=\textwidth]{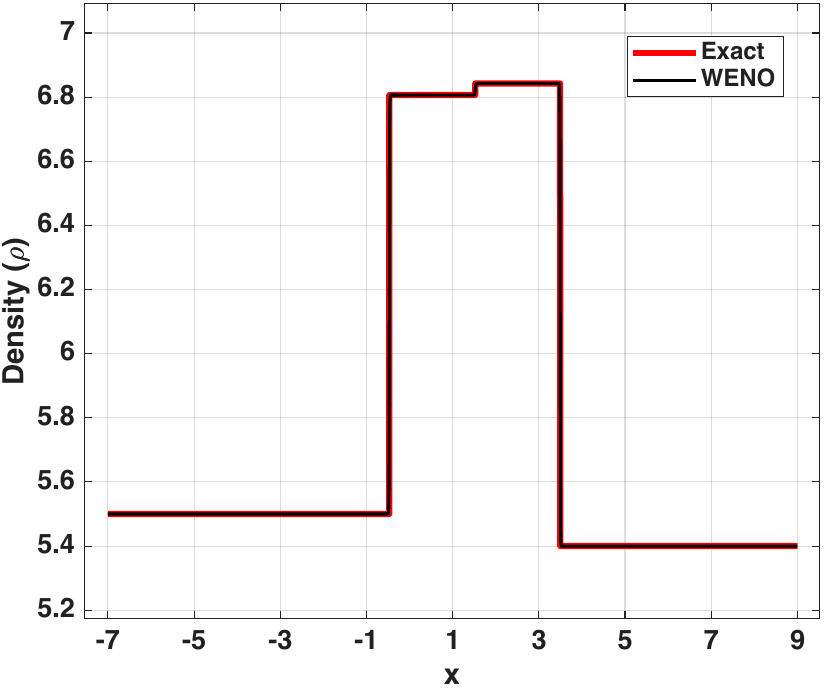}};
  			
  			% Zoomed inset
  			\node[anchor=south west, inner sep=0pt]
  			(inset1) at (3.5,2.23)
  			{\includegraphics[width=0.28\textwidth]{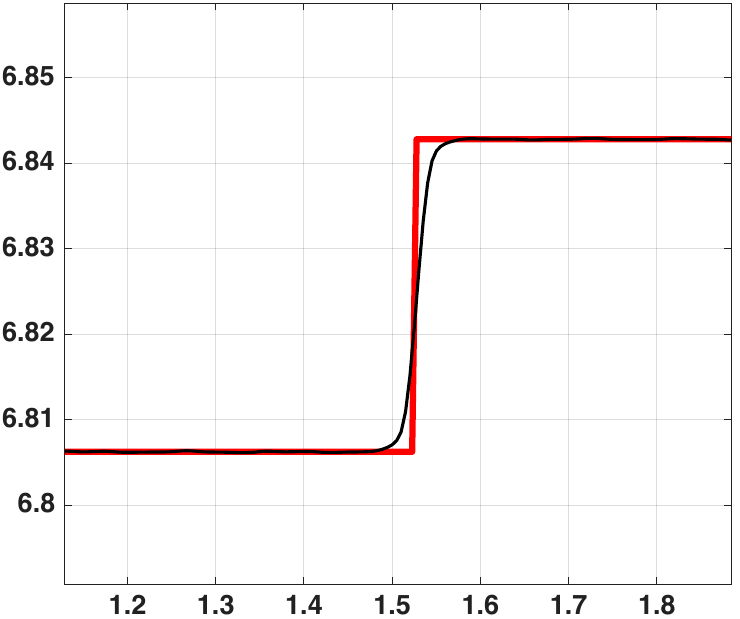}};

  		\end{tikzpicture}
  	\end{minipage}
  	\hfill
  	%------------------------------------------------
  	% Figure 2
  	%------------------------------------------------
  	\begin{minipage}{0.32\textwidth}
  		\centering
  		\begin{tikzpicture}
  			
  			% Main figure
  			\node[anchor=south west] (main2) at (0,0)
  			{\includegraphics[width=\textwidth]{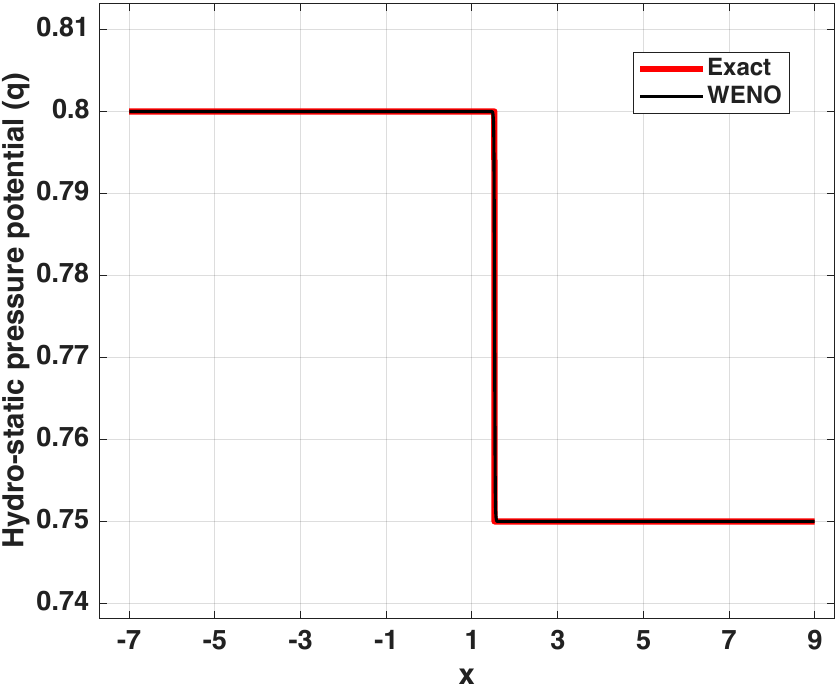}};
  			
  			% Zoomed inset
  			\node[anchor=south west, inner sep=0pt]
  			(inset2) at (3.5,2.13)
  			{\includegraphics[width=0.28\textwidth]{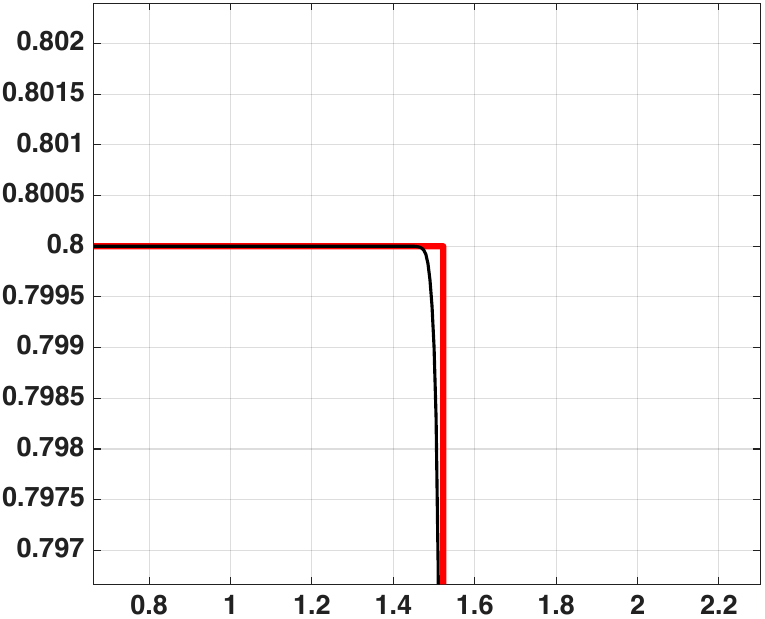}};

  		\end{tikzpicture}
  	\end{minipage}
  	\hfill
  	%------------------------------------------------
  	% Figure 3
  	%------------------------------------------------
  	\begin{minipage}{0.32\textwidth}
  		\centering
  		\begin{tikzpicture}
  			
  			% Main figure
  			\node[anchor=south west] (main3) at (0,0)
  			{\includegraphics[width=\textwidth]{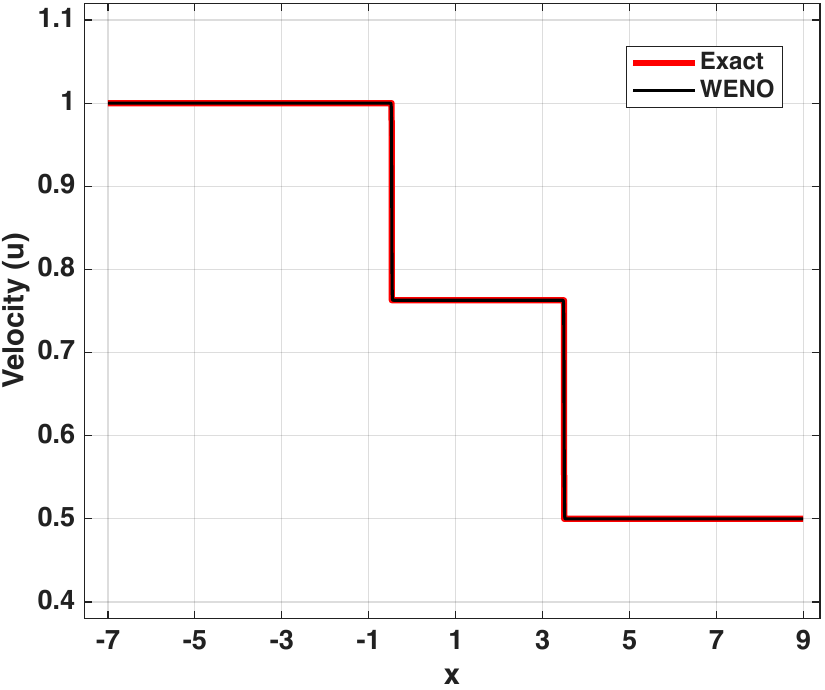}};
  			
  			% Zoomed inset
  			\node[anchor=south west, inner sep=0pt]
  			(inset3) at (3.5,2.3)
  			{\includegraphics[width=0.28\textwidth]{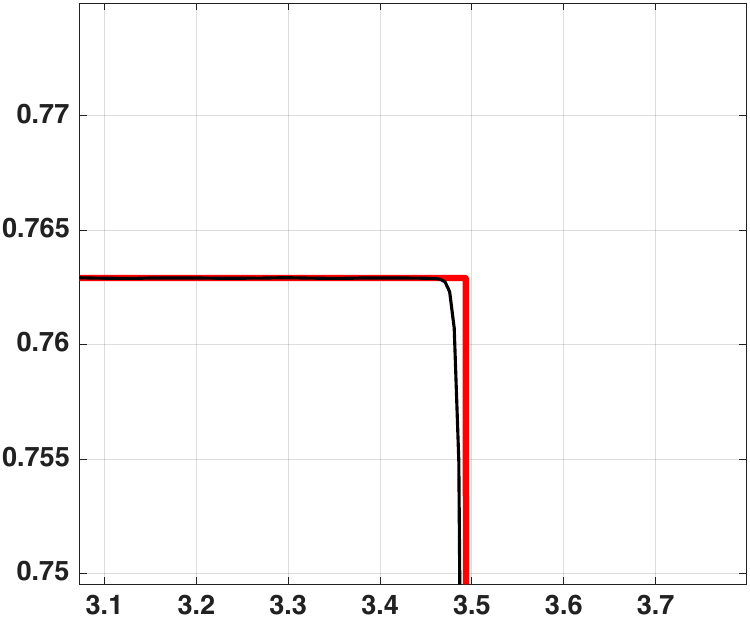}};
  			
  		\end{tikzpicture}
  	\end{minipage}
  	
  	\caption{\mbox{The profiles of state variables $\rho,$ $q,$ and $u$ are displayed for $A=B=0.1.$}}
  	\label{4pf1}
  	
  \end{figure}

\begin{figure}[htbp]
	\centering
	
	%------------------------------------------------
	% Figure 1
	%------------------------------------------------
	\begin{minipage}{0.32\textwidth}
		\centering
		\begin{tikzpicture}
			
			% Main figure
			\node[anchor=south west] (main1) at (0,0)
			{\includegraphics[width=\textwidth]{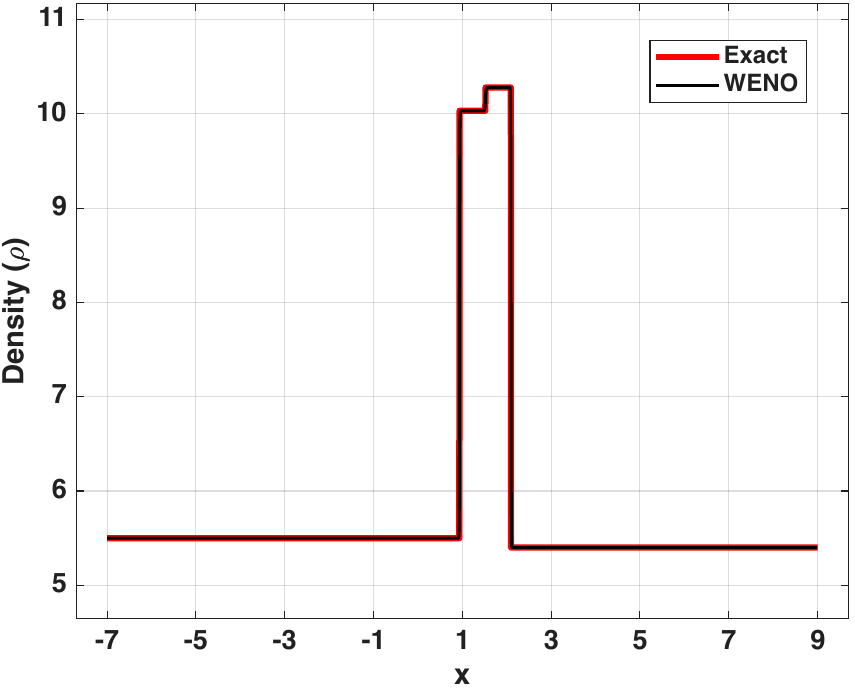}};
			
			% Zoomed inset
			\node[anchor=south west, inner sep=0pt]
			(inset1) at (3.5,2.1)
			{\includegraphics[width=0.28\textwidth]{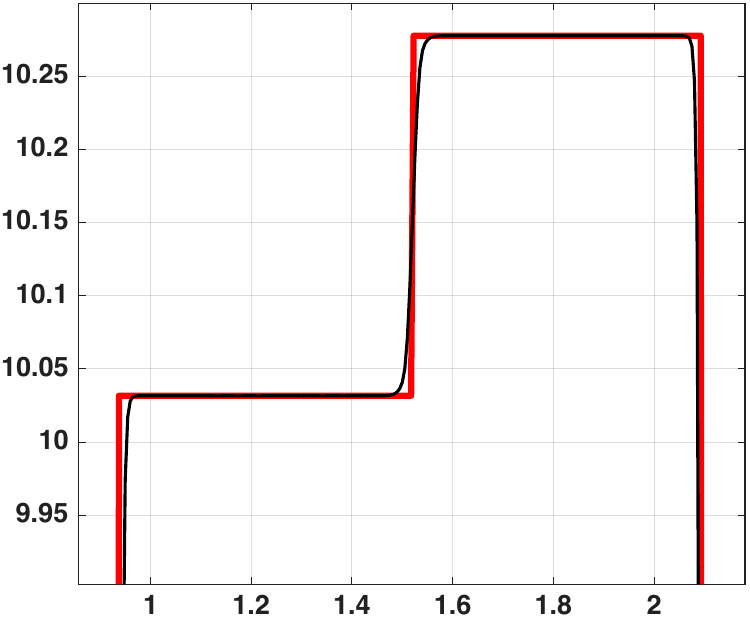}};

		\end{tikzpicture}
	\end{minipage}
	\hfill
	%------------------------------------------------
	% Figure 2
	%------------------------------------------------
	\begin{minipage}{0.32\textwidth}
		\centering
		\begin{tikzpicture}
			
			% Main figure
			\node[anchor=south west] (main2) at (0,0)
			{\includegraphics[width=\textwidth]{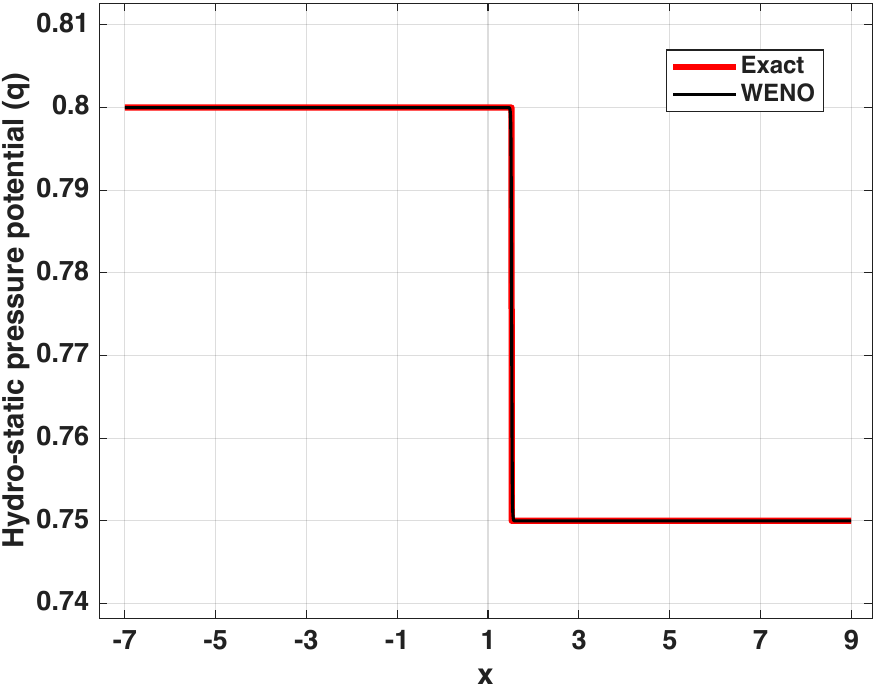}};
			
			% Zoomed inset
			\node[anchor=south west, inner sep=0pt]
			(inset2) at (3.5,2)
			{\includegraphics[width=0.28\textwidth]{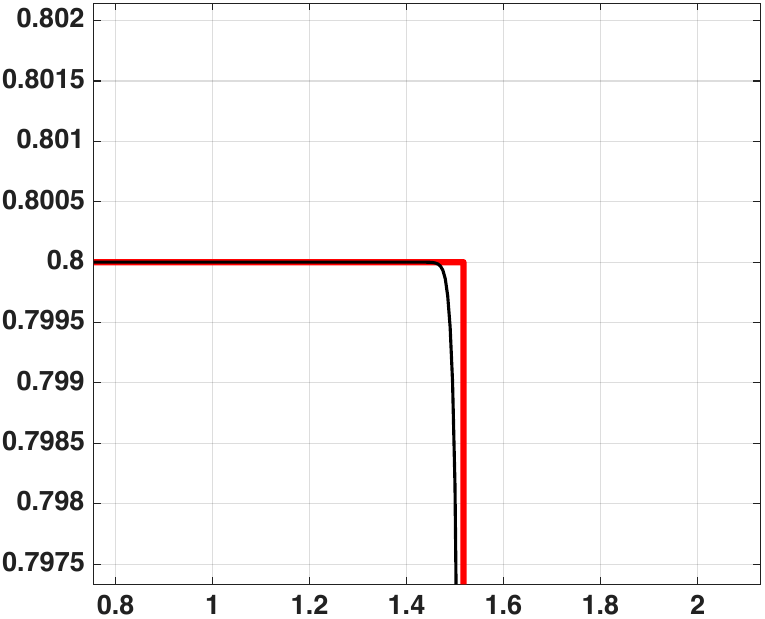}};

		\end{tikzpicture}
	\end{minipage}
	\hfill
	%------------------------------------------------
	% Figure 3
	%------------------------------------------------
	\begin{minipage}{0.32\textwidth}
		\centering
		\begin{tikzpicture}
			
			% Main figure
			\node[anchor=south west] (main3) at (0,0)
			{\includegraphics[width=\textwidth]{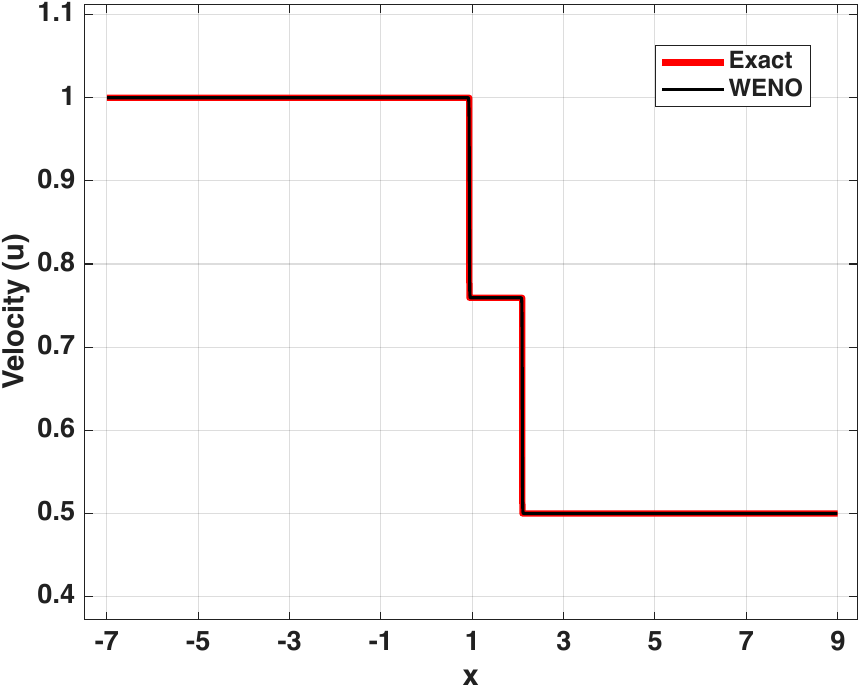}};
			
			% Zoomed inset
			\node[anchor=south west, inner sep=0pt]
			(inset3) at (3.4,2)
			{\includegraphics[width=0.28\textwidth]{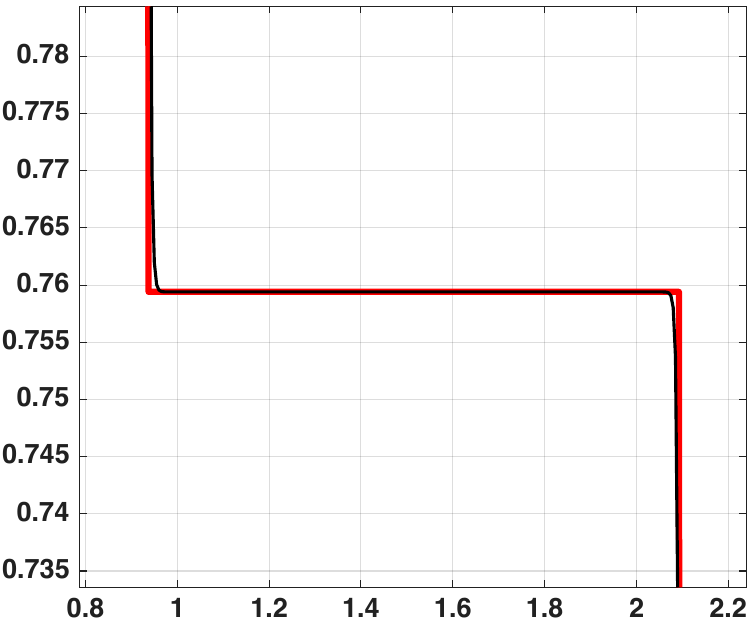}};
			
		\end{tikzpicture}
	\end{minipage}
	
	\caption{\mbox{The profiles of state variables $\rho,$ $q,$ and $u$ are represented for $A=B=0.01.$}}
	\label{4pf2}
	
\end{figure}

\begin{figure}[htbp]
	\centering
	
	%------------------------------------------------
	% Figure 1
	%------------------------------------------------
	\begin{minipage}{0.32\textwidth}
		\centering
		\begin{tikzpicture}
			
			% Main figure
			\node[anchor=south west] (main1) at (0,0)
			{\includegraphics[width=\textwidth]{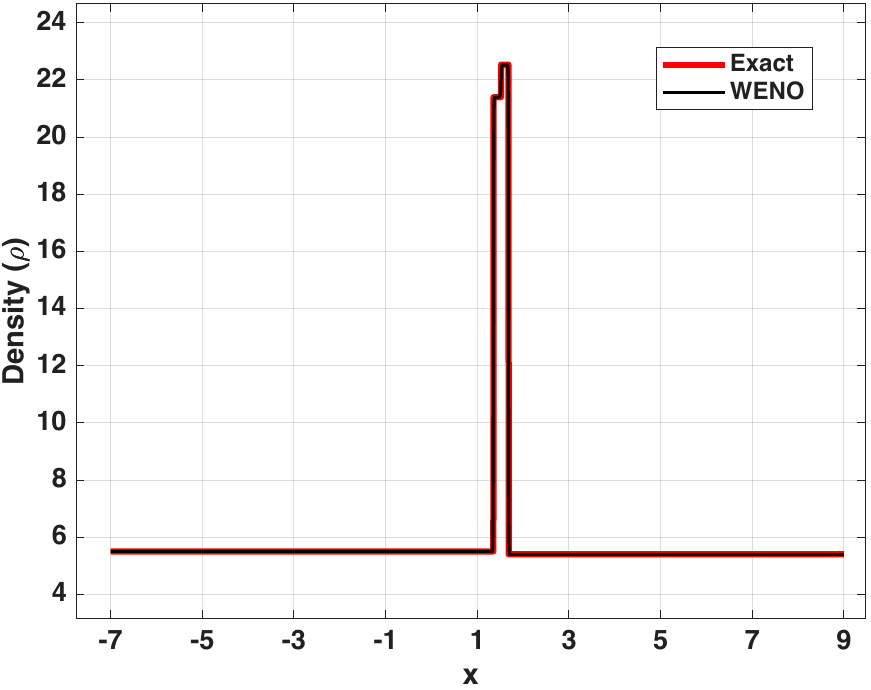}};
			
			% Zoomed inset
			\node[anchor=south west, inner sep=0pt]
			(inset1) at (3.5,2)
			{\includegraphics[width=0.28\textwidth]{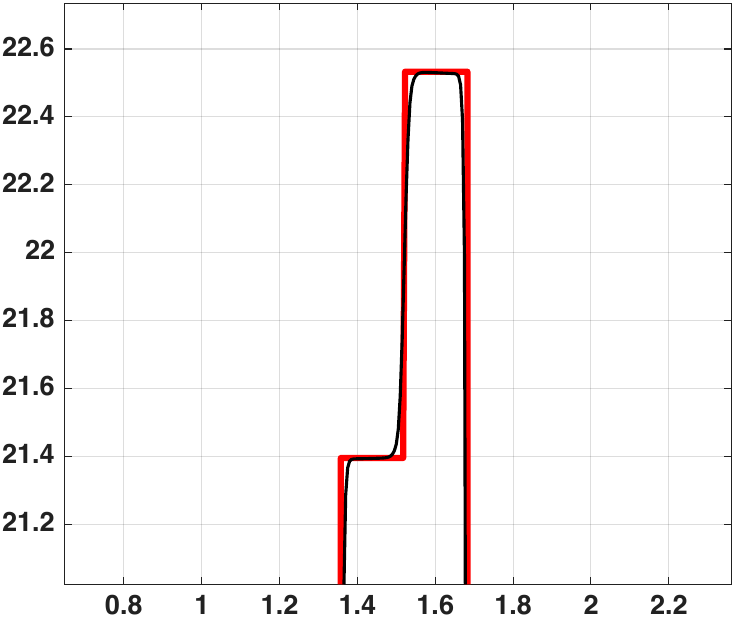}};

		\end{tikzpicture}
	\end{minipage}
	\hfill
	%------------------------------------------------
	% Figure 2
	%------------------------------------------------
	\begin{minipage}{0.32\textwidth}
		\centering
		\begin{tikzpicture}
			
			% Main figure
			\node[anchor=south west] (main2) at (0,0)
			{\includegraphics[width=\textwidth]{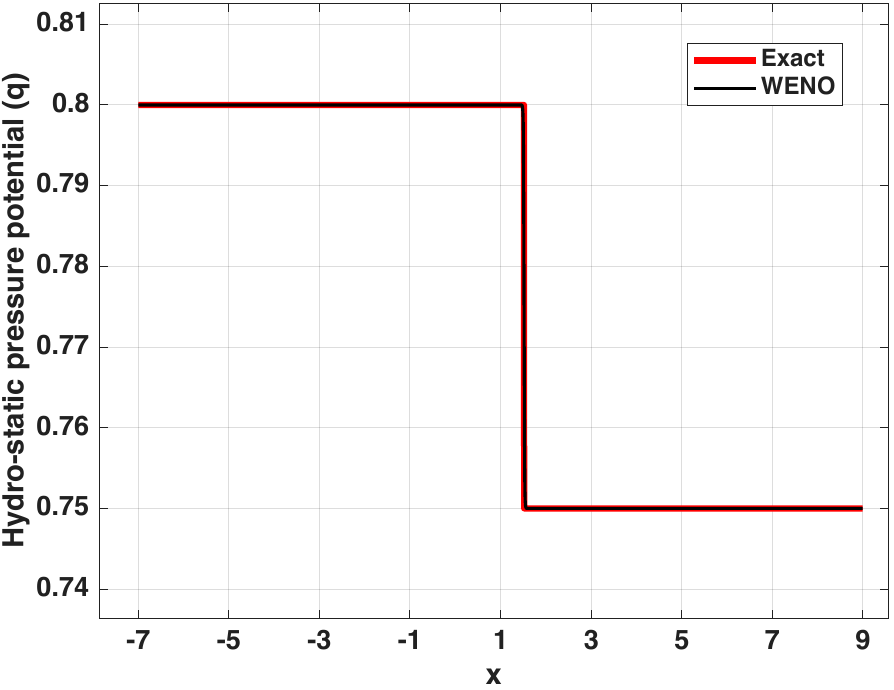}};
			
			% Zoomed inset
			\node[anchor=south west, inner sep=0pt]
			(inset2) at (3.5,2)
			{\includegraphics[width=0.28\textwidth]{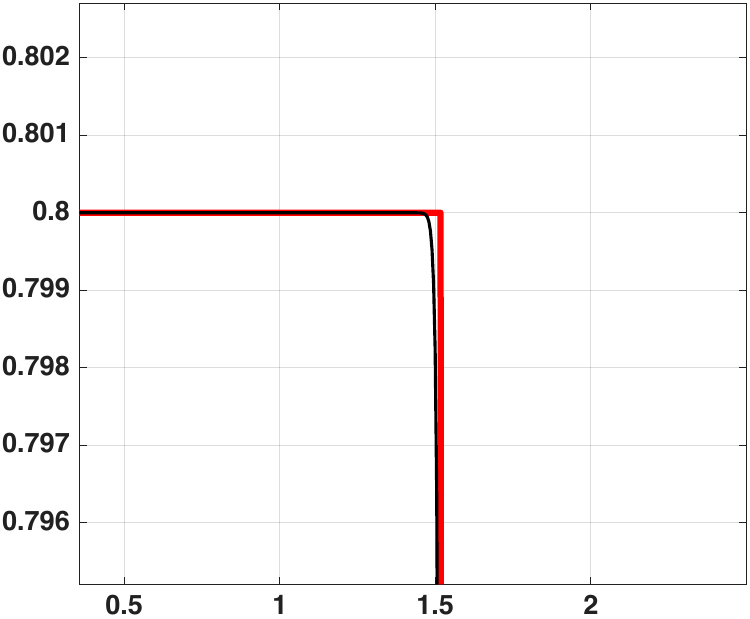}};

		\end{tikzpicture}
	\end{minipage}
	\hfill
	%------------------------------------------------
	% Figure 3
	%------------------------------------------------
	\begin{minipage}{0.32\textwidth}
		\centering
		\begin{tikzpicture}
			
			% Main figure
			\node[anchor=south west] (main3) at (0,0)
			{\includegraphics[width=\textwidth]{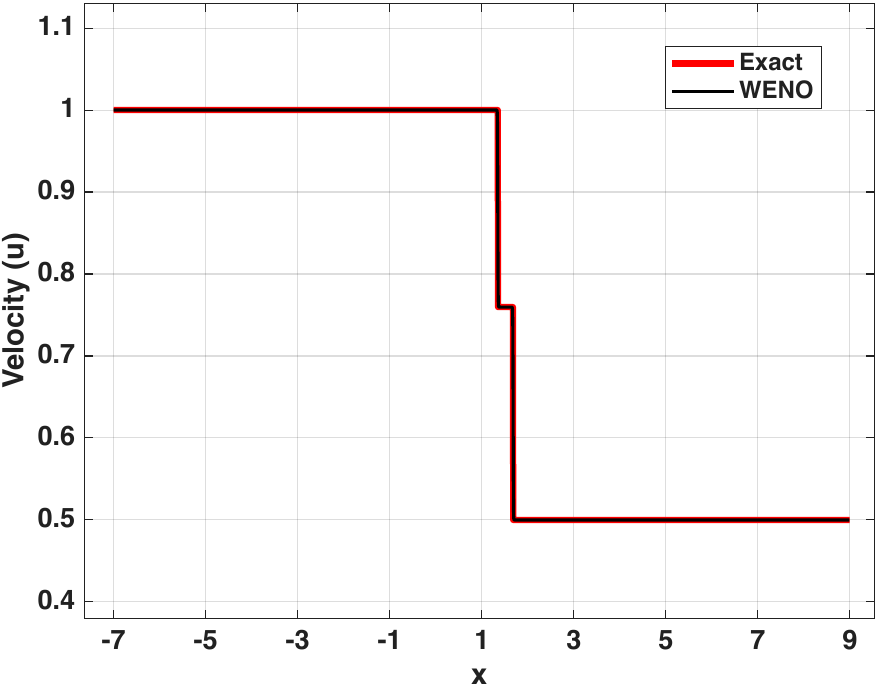}};
			
			% Zoomed inset
			\node[anchor=south west, inner sep=0pt]
			(inset3) at (3.4,2)
			{\includegraphics[width=0.28\textwidth]{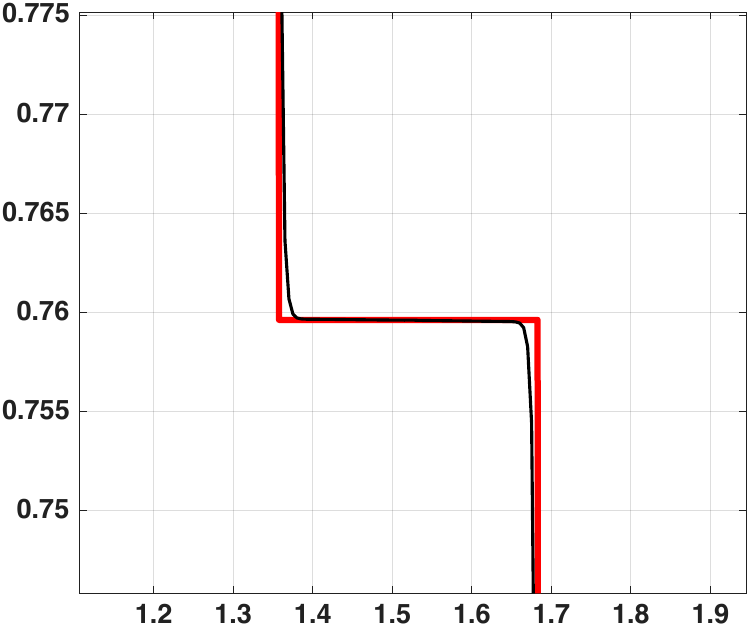}};
		
		\end{tikzpicture}
	\end{minipage}
	
	\caption{\mbox{The profiles of state variables $\rho,$ $q,$ and $u$ are shown for $A=B=0.001.$}}
	\label{4pf3}
	
\end{figure}

\begin{figure}[htbp]
	\centering
	
	%------------------------------------------------
	% Figure 1
	%------------------------------------------------
	\begin{minipage}{0.32\textwidth}
		\centering
		\begin{tikzpicture}
			
			% Main figure
			\node[anchor=south west] (main1) at (0,0)
			{\includegraphics[width=\textwidth]{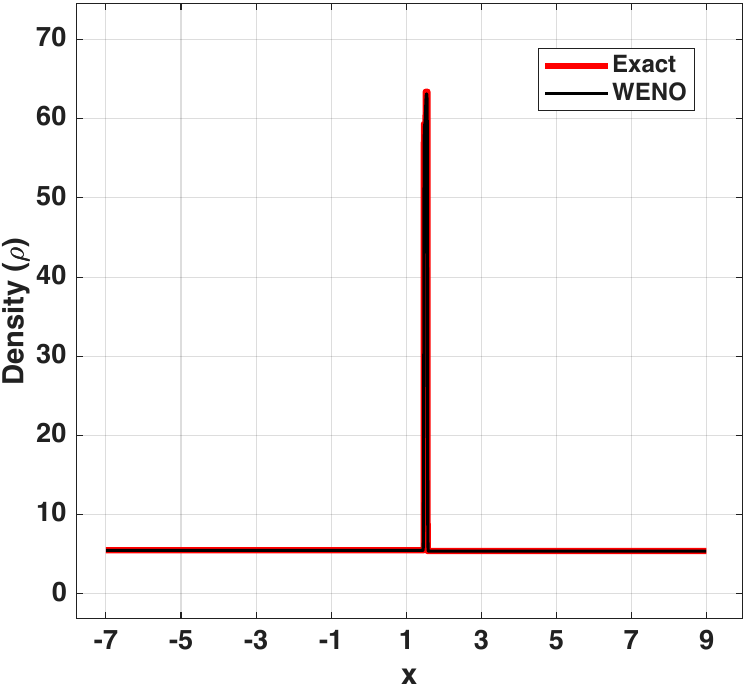}};
			
			% Zoomed inset
			\node[anchor=south west, inner sep=0pt]
			(inset1) at (3.5,2.55)
			{\includegraphics[width=0.28\textwidth]{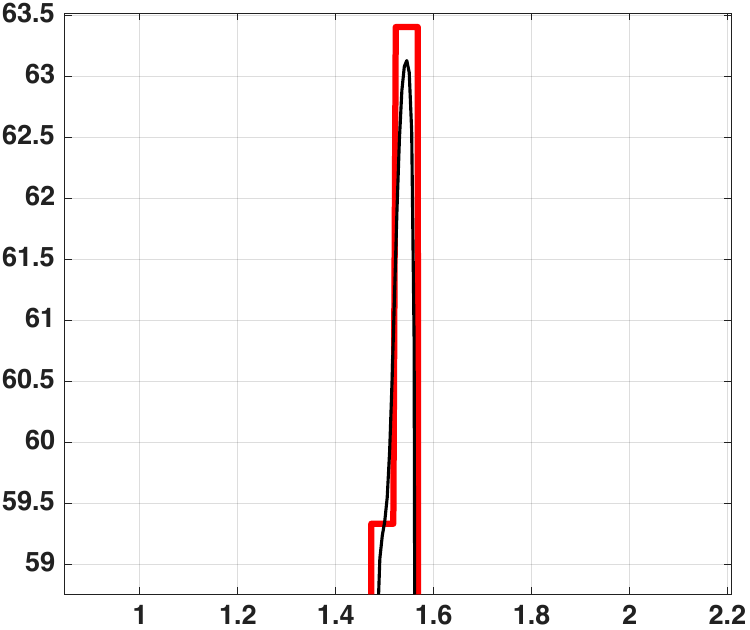}};

		\end{tikzpicture}
	\end{minipage}
	\hfill
	%------------------------------------------------
	% Figure 2
	%------------------------------------------------
	\begin{minipage}{0.32\textwidth}
		\centering
		\begin{tikzpicture}
			
			% Main figure
			\node[anchor=south west] (main2) at (0,0)
			{\includegraphics[width=\textwidth]{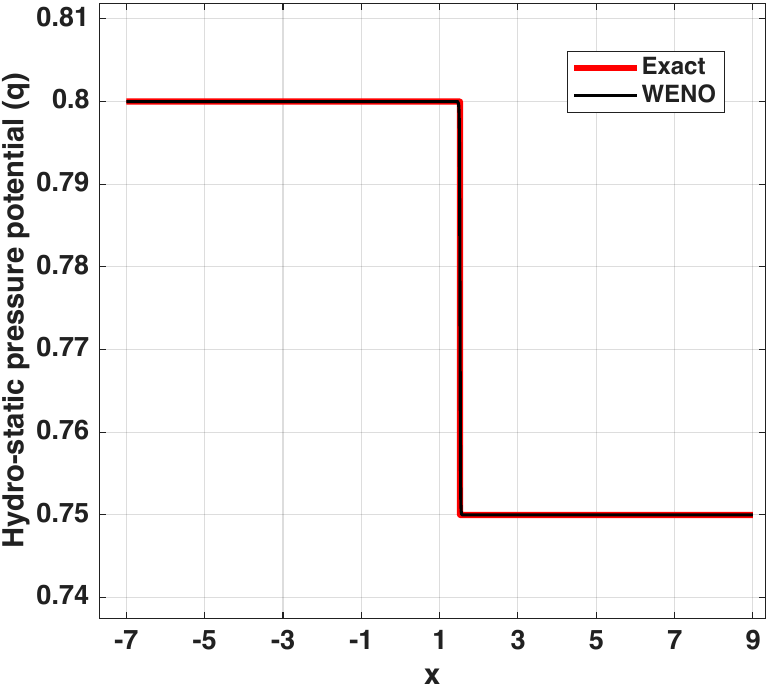}};
			
			% Zoomed inset
			\node[anchor=south west, inner sep=0pt]
			(inset2) at (3.5,2.4)
			{\includegraphics[width=0.28\textwidth]{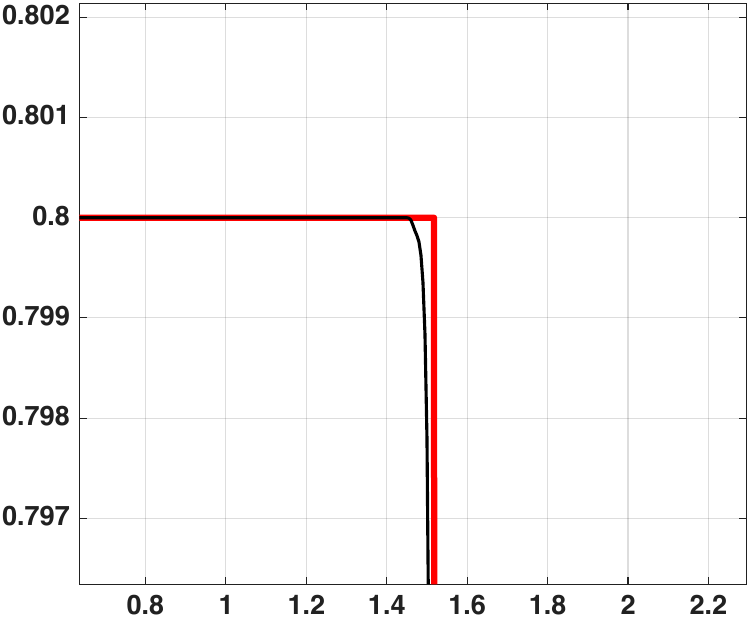}};

		\end{tikzpicture}
	\end{minipage}
	\hfill
	%------------------------------------------------
	% Figure 3
	%------------------------------------------------
	\begin{minipage}{0.32\textwidth}
		\centering
		\begin{tikzpicture}
			
			% Main figure
			\node[anchor=south west] (main3) at (0,0)
			{\includegraphics[width=\textwidth]{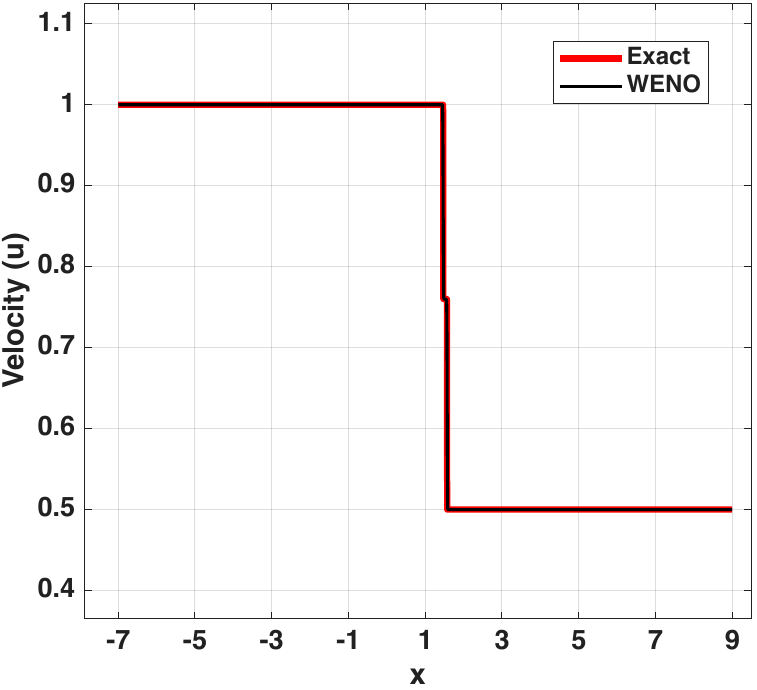}};
			
			% Zoomed inset
			\node[anchor=south west, inner sep=0pt]
			(inset3) at (3.4,2.55)
			{\includegraphics[width=0.28\textwidth]{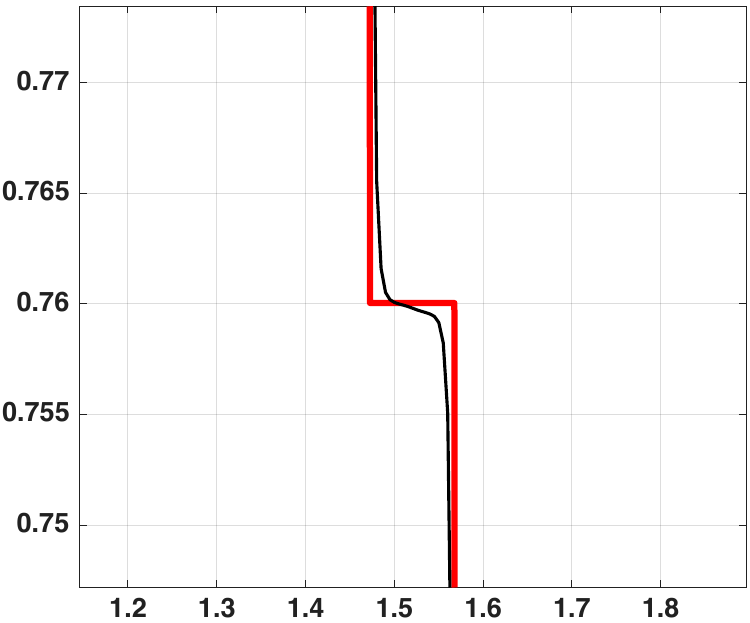}};
			
		\end{tikzpicture}
	\end{minipage}
	
	\caption{\mbox{The profiles of state variables $\rho,$ $q,$ and $u$ are displayed for $A=B=0.0001.$}}
	\label{4pf4}
	
\end{figure}

\begin{equation}\label{h79}
	(\rho_{\ell,r}, q_{\ell,r}, u_{\ell,r})=\begin{cases}
		(5.5, 0.8, 1.0), \,\,\,\qquad x<0,\\
		(5.4, 0.75, 0.5), \qquad x>0,
	\end{cases}
\end{equation}and $\alpha=0.25,$ $n=2.$ Figures~\ref{4pf1}-\ref{4pf5} illustrate the behaviors of the state variables $\rho,$ $q,$ and $u$ for different pairs of values of $A$ and $B.$

%\begin{equation}\label{h79}
%	(\rho_{\ell,r}, q_{\ell,r}, u_{\ell,r})=\begin{cases}
%		(5.5, 0.8, 1.0), \,\,\qquad x<0,\\
%		(5.4, 0.75, 0.5), \qquad x>0,
%	\end{cases}
%\end{equation}and $\alpha=0.25,$ $n=2.$ Figures~\ref{4pf1}-\ref{4pf5} illustrate the behaviors of the state variables $\rho,$ $q,$ and $u$ for different pairs of values of $A$ and $B.$
 %Additionally, Table-$2$ shows the comparison between the numerical and exact values of the intermediate variables corresponding to the different values of $A$ and $B.$ 

\begin{figure}[htbp]
	\centering
	
	%------------------------------------------------
	% Figure 1
	%------------------------------------------------
	\begin{minipage}{0.32\textwidth}
		\centering
		\begin{tikzpicture}
			
			% Main figure
			\node[anchor=south west] (main1) at (0,0)
			{\includegraphics[width=\textwidth]{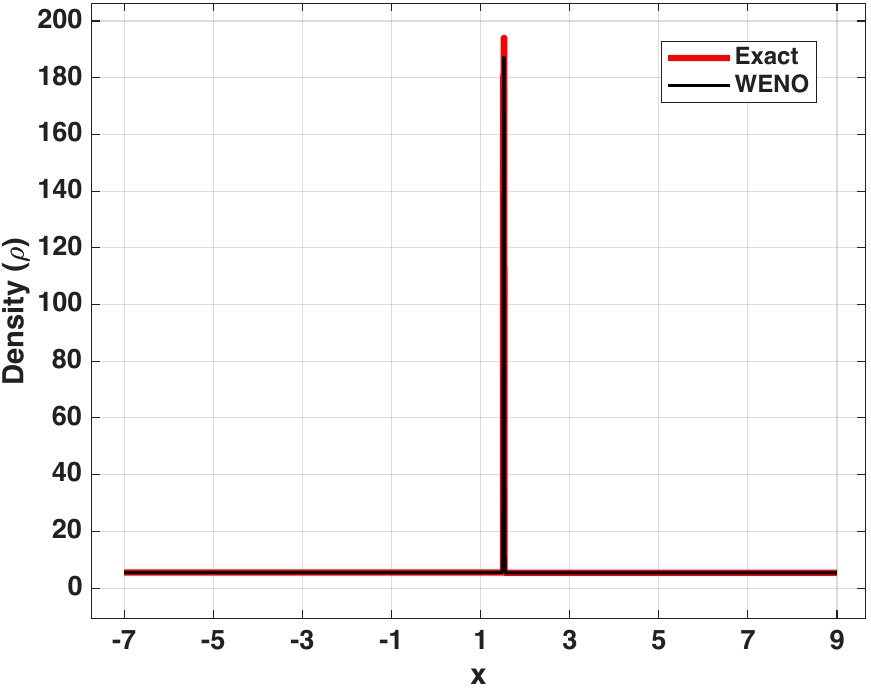}};
			
			% Zoomed inset
			\node[anchor=south west, inner sep=0pt]
			(inset1) at (3.5,2.1)
			{\includegraphics[width=0.28\textwidth]{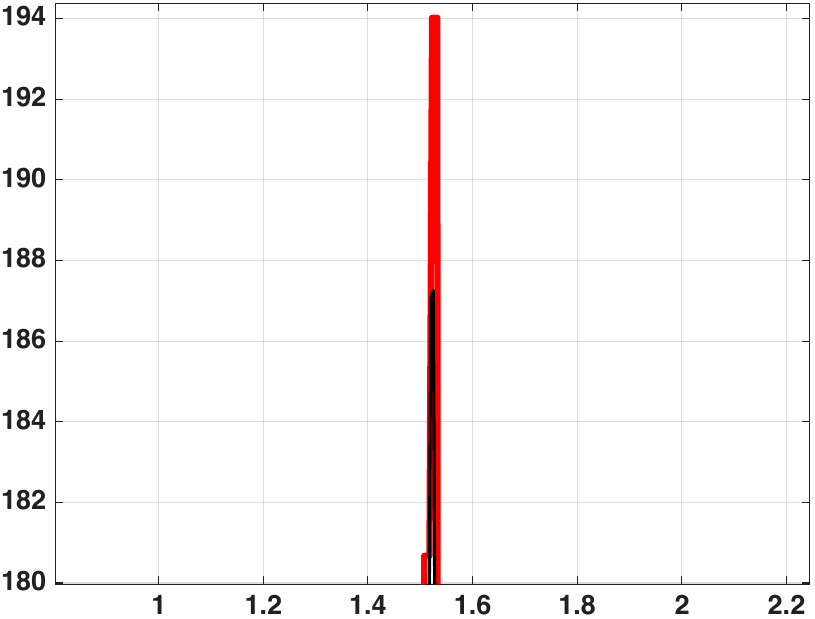}};

		\end{tikzpicture}
	\end{minipage}
	\hfill
	%------------------------------------------------
	% Figure 2
	%------------------------------------------------
	\begin{minipage}{0.32\textwidth}
		\centering
		\begin{tikzpicture}
			
			% Main figure
			\node[anchor=south west] (main2) at (0,0)
			{\includegraphics[width=\textwidth]{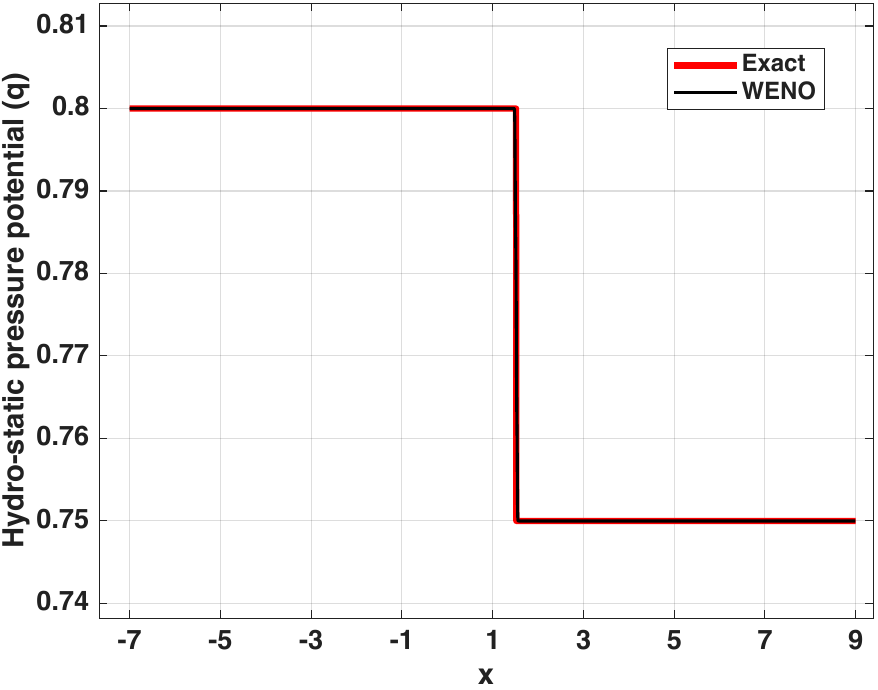}};
			
			% Zoomed inset
			\node[anchor=south west, inner sep=0pt]
			(inset2) at (3.5,2)
			{\includegraphics[width=0.28\textwidth]{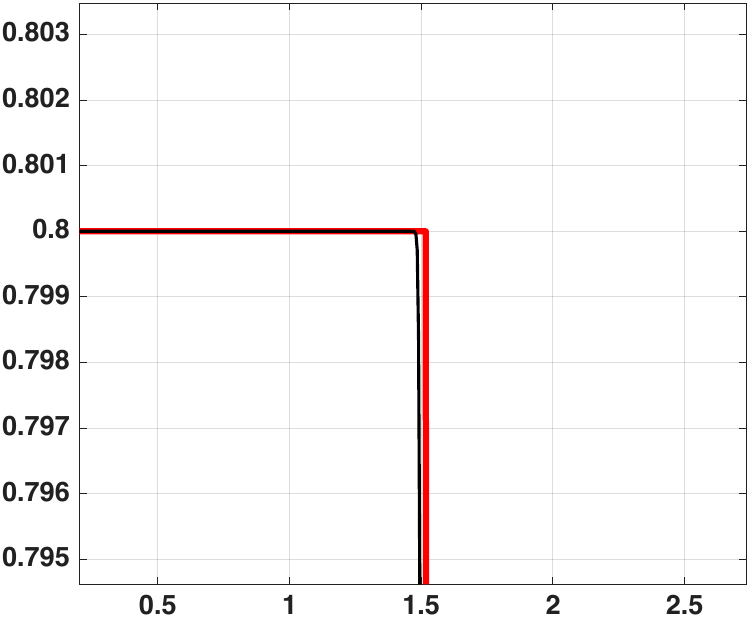}};

		\end{tikzpicture}
	\end{minipage}
	\hfill
	%------------------------------------------------
	% Figure 3
	%------------------------------------------------
	\begin{minipage}{0.32\textwidth}
		\centering
		\begin{tikzpicture}
			
			% Main figure
			\node[anchor=south west] (main3) at (0,0)
			{\includegraphics[width=\textwidth]{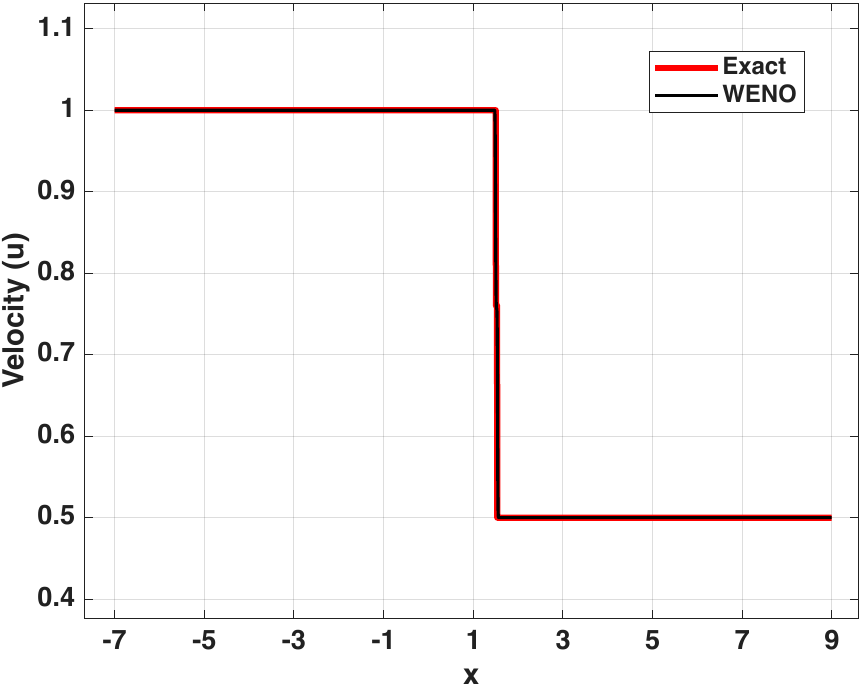}};
			
			% Zoomed inset
			\node[anchor=south west, inner sep=0pt]
			(inset3) at (3.4,2)
			{\includegraphics[width=0.28\textwidth]{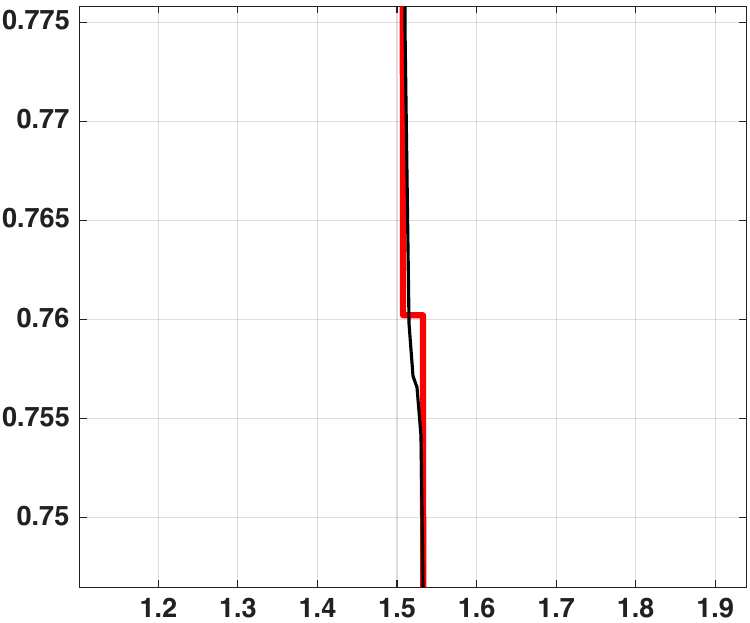}};
			
		\end{tikzpicture}
	\end{minipage}
	
	\caption{\mbox{The profiles of state variables $\rho,$ $q,$ and $u$ are shown for $A=B=0.00001.$}}
	\label{4pf5}
	
\end{figure}

 \section{Conclusion}
 This study examines Riemann solutions of the isentropic Cargo–LeRoux system of conservation laws along with extended Chaplygin fluid models. In fact, it has been shown that Riemann solutions consist four different structures under some special conditions on the initial data. We explore each of the Riemann solutions' limiting behaviors as pressure gets closer to zero. It is shown that in the vanishing pressure limiting case, the Riemann solution having two shock waves and a contact discontinuity tends to a single delta shock solution which is same as the corresponding Riemann solution of the pressureless Cargo-LeRoux model. Further, it is analyzed that the Riemann solution composed by back rarefaction (shock) wave, contact discontinuity, and front shock (rarefaction) wave  converges to the solution having a vacuum state on the left (right) and a delta shock wave on the right (left) for the vanishing pressure limit, respectively. It may be remarked that if the hydro-static pressure is  constant then the Riemann solutions of isentropic Cargo-LeRoux reduce to the corresponding Riemann solutions of Euler equations with same extended Chaplygin fluids. 
 \begin{ack*}The support provided by UGC, India (Ref. No. 201610063559)  is gratefully acknowledged by Priyanka.
 	% The authors are  grateful  to the anonymous referee for the valuable comments/suggestions, which have greatly improved the 
 %	earlier version of this paper. 
 \end{ack*} 
 \noindent\textbf{\textit{Data availability}.} The data that support the findings of this study are available within the article.

% Thus, the delta shock wave with the Dirac delta function in the state variable $\rho$ will appear in the solution. Hence, in the vanishing-parameters limiting case, the Riemann solution composed by $ 1 $-shock wave, contact discontinuity, and $ 3 $-shock wave converges to a single delta shock solution which is same as the
 %corresponding Riemann solution of the pressureless Cargo-LeRoux model.
\def\cprime{$'$}

\end{document}